\newtheorem{theorem}{Theorem} 
\newtheorem{axiom}{Axiom} 
\newtheorem{lemma}{Lemma} 
\newtheorem{proposition}{Proposition} 
\newtheorem{definition}{Definition}
\newtheorem{corollary}{Corollary}
\begin{document} 

\title{Supersymmetric Schr\"{o}dinger Operators with Applications to Morse Theory}
\author{Rohit Jain}

\begin{abstract}
In 1981 Edward Witten proved a remarkable result where he derived the classical Morse Inequalities using ideas from Supersymmetric (SUSY) Quantum Mechanics. In this regard, one has an example where a Physical Theory has something to say about the underlying Mathematical Structure. The objective of this survey paper is to revisit this classical result from the perspective of Schr\"{o}dinger Operators. The paper will be divided in four parts. The first part will revisit the classical theory of Morse and recall some of its fundamental results. In the second part, we consider the underlying physical motivations by considering Quantum Mechanics and 0-Dimensional SUSY. The third part will focus on Schr\"{o}dinger Operators and highlight some of their basic properties. Finally in the last section we will put everything together and present Witten's proof of the Morse Inequalities. Even here we must be completely honest and say that we only follow Witten in proving the Weak Morse Inequalities. The Strong Morse Inequalities are derived using related ideas from Supersymmetry, but mention is made of the techniques used by Witten to get at the Strong Morse Inequalities. 
\end{abstract}

\maketitle
\tableofcontents

\section{Disclaimer}
In this paper we consider Witten's remarkable result that using Supersymmetric Quantum Mechanics  one can derive the classical Morse Inequalities. It is important to note that this is only one of the many ideas present in Witten's paper, "Supersymmetry and Morse Theory". In fact using arguments similar to the ones he presents when proving the Morse Inequalities, Witten was also able to derive the Lefschetz Fixed Point Theorem as well as provide lower bounds on the Betti numbers. The eventual goal Witten had in mind was to extend these results to the infinite dimensional setting where one considers Supersymmetric Quantum Field Theory. Here we only focus on Witten's derivation of the Morse Inequalities. In deriving the Morse Inequalities, Witten considers both the case of a degenerate Morse Function and a non-degenerate Morse Function. We will restrict our attention in this paper to the situation where we have a non-degenerate Morse Function. Given our limited focus we hope to leave these other results to future ruminations. Moreover there are no claims of originality in this paper. 

\section{Introduction to Morse Theory}
Let $f$ be a real-valued smooth function on a smooth manifold $M$. The basic insight of Morse Theory is that such a function $f$ can provide us with information about the underlying topological structure of the Manifold $M$.  For each $x \in M$, $f(x)$ is either a regular value(i.e. the derivative is surjective at the point $x$), or $f(x)$ is a critical value. The optimal local behavior of the function $f$ around a regular point can be completely understood up to diffeomorphism by the inverse function theorem. The question remains how to understand the set: 
$$C(f) = \{x \in M \; : \; df(x) = 0 \}$$  
The content of Morse Theory is to shed light on the relationship between this set and the global topology of $M$.  We make the following assumption before beginning our analysis. $M$ will be a finite dimensional compact oriented Riemannian Manifold. There is an interesting point to make that Morse Theory is not just restricted to finite dimensional manifolds. In fact under various extensions of critical point theory, such as the Palais-Smale condition, one can extend Morse Theory to the infinite dimensional setting. We do not pursue these ideas here. Instead we start by non-trivally restricting the set of smooth functions we will consider: 
\begin{definition} $f \in C^{2}(M,\mathbb{R})$ is called a Morse Function if for every $x_{0} \in C(f)$ the Hessian $d^{2}f(x_{0})$ is nondegenerate. 
\end{definition}
Recall that the Hessain $d^{2}f(x)$ of a differentiable function $f:M \to \mathbb{R}$ on a Riemannian Manifold M is defined to be: $$d^{2}f(x) = \nabla df$$ where $\nabla$ is the Levi-Civita Connection and $df = \sum_{i} \frac{\partial f}{\partial x^{i}}dx^{i}$ is the exterior derivative acting on zero forms. The non-degeneracy condition can be translated to mean that there exists a continuous linear operator 
$$L: T_{x_{0}}M \to T^{*}_{x_{0}}M$$  
defined by 
$$A_{u}(v) = d^{2}f(x_{0})(u,v) \; \; \forall u,v \in T_{x_{0}}M$$
One can also define $$V^{-} \subseteq T_{x_{0}}M$$ to be the subspace spanned by the eigenvectors of $d^{2}f(x_{0})$ (viewed as a bounded, symmetric, bilinear form) with negative eigenvalues. This allows us to consider: $$\mu(x_{0}) = dim(V^{-})$$ the $\textit{Morse Index}$ of $x_{0} \in C(f)$. We also let: 
$$M_{p}(f) = \textnormal{Number of critical points with Morse Index p}$$
With obvious modifications these definitions extend to the infinite dimensional situation. An additional comment worth making is that one can check that the non-degeneracy condition and the Morse Index are independent of the choice of coordinates. This can be verified directly, or by formulating a coordinate invariant definition for the Hessian. 
\\
\\
Because of the non-degenracy character of the Hessian, one can give a local characterization of Morse Functions around any one of the critical points:
\begin{lemma}(Morse Lemma) Let $f:M \to \mathbb{R}$ be of class $C^{2}$ and let p be a non-degenerate critical point of f. The there exists a system of coordinates $(y_{1}, _{\spdddot}, y_{n})$ near p such that in these coordinates: $$ f(y) = f(p) - y_{1}^{2} - _{\spdddot} - y_{\mu}^{2} + y_{\mu + 1}^{2} + _{\spdddot} + y_{n}^{2}$$ Where $\mu$ is the Morse Index of $f$ at p. Hence since M is compact we only have finitely many non-degenerate critical points. 
\end{lemma}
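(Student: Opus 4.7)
The plan is to reduce $f$ to the claimed normal form via an explicit smooth change of coordinates around $p$, mimicking the classical proof of Sylvester's law of inertia but in a smooth (rather than purely algebraic) setting. First I would pick any smooth chart $(x_1,\ldots,x_n)$ centered at $p$, and arrange $f(p)=0$ by subtracting a constant. Since $df(p)=0$, two successive applications of Hadamard's lemma (equivalently, Taylor's formula with integral remainder) allow me to write
$$f(x) = \sum_{i,j} x_i x_j\, h_{ij}(x),$$
with the $h_{ij}$ smooth, symmetric in $(i,j)$, and satisfying $h_{ij}(0) = \tfrac{1}{2} \frac{\partial^2 f}{\partial x^i \partial x^j}(0)$. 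Non-degeneracy of the Hessian at $p$ thus translates to invertibility of the symmetric matrix $\bigl(h_{ij}(0)\bigr)$.

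Next I would diagonalize this variable bilinear form inductively. At step $r$, the induction hypothesis is that one already has smooth coordinates $(u_1,\ldots,u_n)$ near $0$ and smooth symmetric functions $H_{ij}=H_{ji}$ with
$$f(u) = \pm u_1^2 \pm \cdots \pm u_{r-1}^2 + \sum_{i,j \ge r} u_i u_j\, H_{ij}(u),$$
where the residual block $(H_{ij}(0))_{i,j \ge r}$ is still invertible. After a preliminary linear change among $u_r,\ldots,u_n$ I may assume $H_{rr}(0)\ne 0$, so that $\sqrt{\vert H_{rr}\vert}$ is smooth near the origin; completing the square then motivates the substitution
$$v_r = \sqrt{\vert H_{rr}(u)\vert}\left(u_r + \sum_{i>r} \frac{H_{ir}(u)}{H_{rr}(u)}\, u_i\right), \qquad v_i = u_i \text{ for } i \ne r.$$
A check at the origin shows the Jacobian is non-singular, so the inverse function theorem makes this a bona fide coordinate change on a smaller neighborhood, and a direct computation verifies that all $u_r$-dependent terms collapse into $\pm v_r^2$, with the remaining quadratic block still invertible so that the induction continues.

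After $n$ iterations I obtain coordinates $(y_1,\ldots,y_n)$ in which $f = f(p) + \sum_i \epsilon_i y_i^2$ with each $\epsilon_i = \pm 1$; reordering so that the minus signs come first yields the stated form. The count $\mu$ of negative signs is forced to equal the Morse index by Sylvester's law of inertia, since the Hessian in the new coordinates is $\mathrm{diag}(2\epsilon_1,\ldots,2\epsilon_n)$ and the dimension of its maximal negative-definite subspace is coordinate-invariant. Finiteness of non-degenerate critical points on the compact manifold $M$ follows immediately: the normal form displays $0$ as the unique critical point of $f$ in its chart, so every such critical point is isolated, and a compact space can contain only finitely many isolated points. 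The most delicate step, and the one I would spend the most care on, is the inductive change of variables --- in particular, arranging $H_{rr}(0)\ne 0$ by the preliminary linear change and then checking smoothness and invertibility of $(v_1,\ldots,v_n)$ via the inverse function theorem; the remainder is bookkeeping on top of Hadamard's lemma.
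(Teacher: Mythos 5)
The paper states the Morse Lemma without proof, so there is no in-text argument to compare against; what you have written is the standard Milnor-style proof via Hadamard's lemma and iterated completion of the square, and the core computation (the substitution $v_r = \sqrt{|H_{rr}(u)|}\bigl(u_r + \sum_{i>r} H_{ir}(u)u_i/H_{rr}(u)\bigr)$, the Jacobian check, the reduction of the residual block, and the appeal to Sylvester's law of inertia) is correct.

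Two points deserve more care. First, regularity: the lemma as stated takes $f$ of class $C^{2}$, but your argument repeatedly invokes smoothness. Two applications of Hadamard's lemma to a $C^{2}$ function produce $h_{ij}$ that are merely continuous, so $\sqrt{|H_{rr}|}$ is continuous but not $C^{1}$ and the inverse function theorem does not apply; the classical completion-of-squares proof really needs $f \in C^{k+2}$ to produce a $C^{k}$ change of coordinates, hence at least $C^{3}$ for a genuine diffeomorphism. Either assume $f$ smooth (as the paper implicitly does throughout) or acknowledge that the $C^{2}$ case requires a different argument. Second, your finiteness step overreaches: it is not true that ``a compact space can contain only finitely many isolated points'' (consider $\{0\}\cup\{1/n : n\ge 1\}$). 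The correct reasoning is that when \emph{every} critical point of $f$ is non-degenerate -- i.e.\ $f$ is a Morse function, which is the standing assumption in the paper -- the Morse Lemma shows each point of $C(f)$ is isolated in $C(f)$; since $C(f)=\{df=0\}$ is closed in the compact manifold $M$, it is itself compact, and a compact discrete set is finite. Without the Morse-function hypothesis, non-degenerate critical points could accumulate at a degenerate one and be infinite in number, so the hypothesis you are implicitly using should be made explicit.
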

In order to understand more fully why Morse Functions have anything useful to say about the underlying topological structure of the manifold M, we will have to start by recalling some facts from the topological study of Manifolds and state some fundamental results in Morse Theory. The motivation for this exposition is to convey the following two general remarks: 
\\
\\
1.) If one considers the set $$M^{c} = \{x \in M \; : \; f(x) \leq c\}$$ the topology of $M^{c}$ does not change as we vary $c$ unless upon varying we pass through a critical point.   
\\
\\
2.) When passing through a critical point the Morse Index of the critical point completely determines how the topology of $M$ changes. 
\\
\\
The following will be a brief recollection of basic constructions in Algebraic Topology. We follow the discussion in \cite{AM}. 

\begin{definition} Let $(G_{i})_{i}$ be a sequence of Abelian Groups and let $(\psi_{i})_{i}$ be a sequence of homomorphisms: $$_{\spdddot} \to G_{i} \stackrel{\psi_{i}}{\to} G_{i+1} \stackrel{\psi_{i+1}}{\to}  G_{i+2} \to \; _{\spdddot}$$
The sequence is exact if $ \forall i \; im(\psi_{i}) = ker (\psi_{i+1})$ 
\end{definition}
For example if we let $G_{1}, G_{2}$ be Abelian Groups, and consider the following sequence: 
$$0  \to G_{1} \stackrel{\psi}{\to} G_{2} \to 0$$
Then we have exactness if and only if $\psi$ is an isomorphism. 
\\
\\
We will let $(X,A)$ denote a Topological Space $X$ with $A \subseteq X$. Also $(X,A) \subseteq (Y,B)$ means $X \subseteq Y$ and $A \subseteq B$. Let $f: (X,A) \to (Y,B)$ denote a continuous function from $X$ to $Y$ with $f(A) \subseteq f(B)$. We now Axiomatize the notion of Homology Groups.
\begin{definition} (Homology Groups)
\\
(a) $\forall q \in \mathbb{Z}$ and for every pair $(X,A)$ we associate a group $H_{q}(X,A)$
\\
(b) To every map $f:(X,A) \to (Y,B)$ we associate a homomorphism $f_{*}: H_{q}(X,A) \to H_{q}(Y,B)$
\\
(c) $\forall q \in \mathbb{Z}$ and every pair $(X,A)$ we associate a homomorphism $\partial : H_{q}(X,A) \to H_{q-1}(A)$
\end{definition}
Let us now present the Axioms we need to study Homology Groups:
\begin{axiom} $f = Id_{X} \implies f_{*} = Id_{H_{q}(X,A)}$
\end{axiom}
\begin{axiom} Let $f:(X,A) \to (Y,B)$ and $g: (Y,B) \to (Z,C)$ then $(g \circ f)_{*} = g_{*} \circ f_{*}$
\end{axiom}
\begin{axiom} $f:(X,A) \to (Y,B) \implies \partial \circ f_{*} = (f|_{A})_{*} \circ \partial$
\end{axiom} 
\begin{axiom} Let $i: A \to X$ and $j: (X, \varnothing) \to (X,A)$ be the inclusion maps, then the sequence: $$_{\spdddot}  \stackrel{\partial}{\to}  H_{q}(A) \stackrel{i_{*}}{\to} H_{q}(X) \stackrel{j_{*}}{\to}  H_{q}(X,A) \stackrel{\partial}{\to}  H_{q-1}(A) \to \; _{\spdddot}$$ is exact.
\end{axiom}
\begin{axiom} If $f,g:(X,A) \to (Y,B)$ are homotopic maps (i.e. $\exists h: [0,1] \times (X,A) \to (Y,B)$ such that $h(0, \centerdot) = f$ and $h(1, \centerdot) = g$.) then $f_{*} = g_{*}$. 
\end{axiom}
\begin{axiom} (Excision) If $U$ is an open set of $X$ with $\bar{U} \subseteq int(A)$ and $i: (X \setminus U, A \setminus U) \to (X,A)$ is the inclusion map, then $i_{*}$ is an isomorphism
\end{axiom}
\begin{axiom} Let $X = \{p\}$ then: 
$$ H_{q}(\{p\}) =  
\begin{cases}
\mathbb{Z}, & \text{if }q = 0 \\
0, & \text{if }q\neq 0
\end{cases}
$$
\end{axiom}
Using these axioms one can in principle derive many of the usual properties about Homology groups. In particular one can prove:
\begin{theorem} (Mayer-Vietoris Sequence) Let $X = X_{1} \cup X_{2}$ with $X_{1}$,$X_{2}$ open sets. Let $A_{1} \subseteq X_{1}$ and $A_{2} \subseteq X_{2}$ also open. Then if $X_{1} \cap X_{2} \neq \varnothing$ letting $A = A_{1} \cup A_{2}$ and defining $i: (X_{1},A_{1}) \to (X,A)$, $j: (X_{2}, A_{2}) \to (X,A)$, $l: (X_{1} \cap X_{2},A_{1} \cap A_{2}) \to (X_{1},A_{1})$ and  $k: (X_{1} \cap X_{2},A_{1} \cap A_{2}) \to (X_{2},A_{2})$ the natural inclusions, there exists an exact sequence: 
$$_{\spdddot} \to  H_{q}(X_{1},A_{1}) \oplus H_{q}(X_{2},A_{2})  \stackrel{\psi}{\to} H_{q}(X,A) \to  H_{q-1}(X_{1} \cap X_{2},A_{1} \cap A_{2})$$ 
$$\stackrel{\varphi}{\to}  H_{q-1}(X_{1},A_{1}) \oplus H_{q-1}(X_{2},A_{2})  \to \; _{\spdddot}$$
where $\psi = i_{*} - j_{*}$ and $\varphi = (k_{*}, l_{*})$. 
\end{theorem}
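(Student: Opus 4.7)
The plan is to derive the Mayer-Vietoris sequence by combining the long exact sequence of a pair (Axiom 4) with the Excision Axiom (Axiom 6), via a diagram chase of the Barratt-Whitehead type. The hypotheses that $X_1, X_2$ are open and cover $X$ are precisely what is needed to render excision applicable.

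First I would exploit the openness of the cover to obtain the key excision isomorphism. Since $X = X_1 \cup X_2$, the set $U = X \setminus X_1$ is closed in $X$ and satisfies $\overline{U} = U \subseteq X_2 = \operatorname{int}(X_2)$, so Axiom 6 yields the isomorphism
$$H_q(X_1, X_1 \cap X_2) \xrightarrow{\cong} H_q(X, X_2)$$
induced by the inclusion of pairs, and symmetrically $H_q(X_2, X_1 \cap X_2) \cong H_q(X, X_1)$. For the relative statement with $A = A_1 \cup A_2$, analogous isomorphisms $H_q(X_1, A_1) \cong H_q(X, A_1 \cup X_2)$ and $H_q(X_2, A_2) \cong H_q(X, A_2 \cup X_1)$ are obtained by the same idea; openness of the cover again guarantees that the necessary closure-versus-interior hypothesis in Axiom 6 is met.

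Next I would assemble the Mayer-Vietoris sequence from the long exact sequences of the pairs $(X_1, A_1)$, $(X_2, A_2)$, $(X, A)$ and $(X_1 \cap X_2, A_1 \cap A_2)$ furnished by Axiom 4, connected by the natural inclusion maps (whose compatibility with $\partial$ is Axiom 3). The map $\psi = i_* - j_*$ is the difference of inclusions into $H_q(X,A)$, the map $\varphi$ pairs the two inclusions of the intersection, and the connecting homomorphism $H_q(X,A) \to H_{q-1}(X_1 \cap X_2, A_1 \cap A_2)$ is manufactured by composing the boundary operator $\partial$ from one of the pair sequences with the inverse of an excision isomorphism. The alternating sign in $\psi$ is what forces $\psi \circ \varphi = 0$ and is the hallmark of the Barratt-Whitehead construction.

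The main obstacle is verifying exactness at each of the three kinds of terms in the sequence. The containments $\operatorname{im} \subseteq \ker$ at each spot are routine applications of naturality and exactness of the individual pair sequences, but the reverse containments require genuine element chasing; in the relative setting one must additionally pass through long exact sequences of triples, which themselves follow from Axioms 3 and 4 applied iteratively but add substantial bookkeeping. The diagram chase is standard but tedious, and given the survey character of the present paper I would defer the full verification to a standard algebraic topology reference and only indicate the structure of the argument here.
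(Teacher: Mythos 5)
The paper itself offers no proof of the Mayer--Vietoris theorem: it is stated as one of the ``usual properties'' derivable from the axioms and the derivation is silently deferred, so there is no argument in the source to compare against. Judged on its own merits, your route --- the Barratt--Whitehead argument combining the long exact sequences of pairs (Axiom 4) with excision (Axiom 6) and closing with a diagram chase --- is the standard and correct way to extract Mayer--Vietoris from this axiomatic package, and your observation that openness of the cover is precisely what makes excision bite is the right one.

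Two wrinkles are worth flagging. First, Axiom 6 as stated in this paper requires the excised set $U$ to be \emph{open}, while your $U = X \setminus X_1$ is closed; bridging this (e.g.\ by interpolating an open $U'$ with $X \setminus X_1 \subseteq U' \subseteq \overline{U'} \subseteq X_2$ and comparing the two excised pairs) is standard for the nice spaces in play but is not free and should be acknowledged. Second, and more substantively, the relative excision isomorphism you invoke is not what excision actually yields: excising $X \setminus X_1$ from the pair $(X, A_1 \cup X_2)$ produces $H_q(X_1, A_1 \cup (X_1 \cap X_2)) \cong H_q(X, A_1 \cup X_2)$, not $H_q(X_1, A_1) \cong H_q(X, A_1 \cup X_2)$; the two disagree unless $X_1 \cap X_2 \subseteq A_1$. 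The relative Mayer--Vietoris sequence genuinely requires more scaffolding than the absolute one --- the usual axiomatic route proves the absolute sequence first and then compares the absolute sequences for the covers $\{X_1, X_2\}$ and $\{A_1, A_2\}$ against the long exact sequences of the four pairs via a braid/five-lemma argument. Your instinct to pass through triples points in the right direction and deferring the chase is reasonable for a survey, but the relative excision step as written would not survive the chase, and that is the gap to close before handing off to a reference.
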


\begin{lemma} (Deformation Retraction) If $A \subseteq X$ is a deformation retract of X (i.e. $\exists h: [0,1] \times (X)$ such that $h(0, \centerdot) = Id_{X}$, $h(1, \centerdot) \subseteq A$ and $h(t, \centerdot)_{A} = Id_{A} \;  \forall t$) then $H_{q}(A) = 0 \; \forall q \in \mathbb{Z}$.     
\end{lemma}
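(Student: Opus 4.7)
The conclusion as literally worded cannot hold under the given deformation-retract hypothesis alone: Axiom 7 forces $H_0(\{p\}) = \mathbb{Z}$, yet a single point is perfectly capable of being a strong deformation retract (of a disc, for instance). I would therefore read the lemma in the sense that appears to be intended and that the later Morse-theoretic handle-attaching arguments actually require, namely that the inclusion $i: A \hookrightarrow X$ induces an isomorphism $i_*: H_q(A) \to H_q(X)$ — equivalently (by the long exact sequence) that $H_q(X, A) = 0$ for all $q$. My plan is to derive this in two strokes from the axioms.

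First, I would use the homotopy axiom (Axiom 5) to upgrade the deformation retraction to a homology isomorphism. Let $r := h(1, \cdot): X \to A$. The condition $h(t, \cdot)|_A = \mathrm{Id}_A$ specialized at $t = 1$ gives the strict equality $r \circ i = \mathrm{Id}_A$; meanwhile $h$ itself is a homotopy between $h(0, \cdot) = \mathrm{Id}_X$ and $h(1, \cdot) = i \circ r$, so $i \circ r \simeq \mathrm{Id}_X$. Combining Axioms 1, 2, and 5 yields $r_* \circ i_* = \mathrm{Id}_{H_q(A)}$ and $i_* \circ r_* = \mathrm{Id}_{H_q(X)}$, so $i_*$ is a two-sided inverse of $r_*$ and hence an isomorphism in every degree $q$.

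Second, I would feed this isomorphism into the long exact sequence of the pair supplied by Axiom 4,
$$\cdots \to H_q(A) \xrightarrow{i_*} H_q(X) \xrightarrow{j_*} H_q(X, A) \xrightarrow{\partial} H_{q-1}(A) \xrightarrow{i_*} H_{q-1}(X) \to \cdots.$$
Because the $i_*$ immediately preceding $j_*$ is surjective, exactness forces $j_* = 0$; because the $i_*$ immediately following $\partial$ is injective, exactness forces $\partial = 0$. Exactness at $H_q(X, A)$ then squeezes that group between a zero map in and a zero map out, giving $H_q(X, A) = 0$ for every $q$.

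The main obstacle is interpretive, not technical: one has to notice that the displayed conclusion $H_q(A) = 0$ is inconsistent with Axiom 7 and identify which true statement the author must have meant. Once the correct target is in view, the derivation from the axioms is essentially a two-line exercise — one line to observe that $h$ packages both the identity $r \circ i = \mathrm{Id}_A$ and the homotopy $i \circ r \simeq \mathrm{Id}_X$ simultaneously, and one line to read off the vanishing from the long exact sequence.
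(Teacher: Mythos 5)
You are quite right that the lemma as printed is false: Axiom 7 forces $H_0(\{p\}) = \mathbb{Z}$, and a point is a strong deformation retract of any contractible space, so the conclusion $H_q(A) = 0$ cannot be what is meant. The intended statement — which is in fact what the paper's subsequent computation of $H_q(S^n)$ via Mayer--Vietoris requires — is that $H_q(X, A) = 0$ for all $q$, or equivalently that the inclusion $i: A \hookrightarrow X$ induces an isomorphism on homology. The paper states the lemma without proof, so there is no argument of its own to compare against; that said, your two-step derivation is exactly the standard axiomatic proof and is correct: the retraction data package both the strict identity $r \circ i = \mathrm{Id}_A$ and the homotopy $i \circ r \simeq \mathrm{Id}_X$, so Axioms 1, 2, and 5 make $i_*$ a two-sided inverse of $r_*$, and then exactness of the sequence in Axiom 4 forces $j_* = 0$ and $\partial = 0$ on either side of $H_q(X,A)$, hence $H_q(X,A) = 0$. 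The real contribution here is the diagnosis and repair of the misstated conclusion, which you have handled cleanly.
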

Using these two results one can prove that:
$$ H_{q}(S^{n}) =  
\begin{cases}
\mathbb{Z}, & \text{if }q = 0,n \\
0, & \text{otherwise}
\end{cases}
$$
Let us now construct an explicit Homology Theory. 
\begin{definition} For every non-negative integer r, we define the simplex, $s_{r} \subseteq \mathbb{R}^{n+1}$: 
$$s_{r} = \{ \; t_{0}\mathbf{e}_{0} + _{\spdddot} + t_{r}\mathbf{e}_{r} \; : \; \sum_{i = 0}^{r}t_{i} = 1\: \}$$
where $\mathbf{e}_{i} = (0,0, _{\spdddot},0,1,0,_{\spdddot},0)$ in the $i^{th}$ slot. A singular r-simplex of a topological space $X$ is a continuous map $\sigma_{r}: s_{r} \to X$. 
\end{definition}

Let $C_{r}$ denote the Abelian Group generated by linear combinations (with integer coefficients) of singular r-simplixes of $X$. These will be called the singular r dimensional chains of $X$.  
\\
\\
Given $\sigma_{r}: s_{r} \to X$, we define the $j^{th}$ face of $\sigma_{r}$ by restricting the action of $\sigma_{r}$ to the set: 
$$s_{r-1}^{j} = \{ \; t_{0}\mathbf{e}_{0} + _{\spdddot} + t_{r}\mathbf{e}_{r} \in s_{r} \; : \; t_{j} = 0 \; \}$$
We denote this restriction by $\sigma_{r-1}^{j}$. We can also define the boundary operator $\partial_{r}$ as the linear extension of the map: $$\partial_{r} \sigma_{r} = \sum_{j = 0}^{r} (-1)^{j}\sigma_{r-1}^{j}$$ This boundary map satisfies the additional property: $$\partial_{r-1} \circ \partial_{r} = 0$$ Hence using the boundary operator, the singular chains form a Homological Complex: 
$$_{\spdddot} \to  C_{r+1}  \stackrel{\partial_{r+1}}{\to} C_{r} \stackrel{\partial_{r}}{\to}  C_{r-1} \to _{\spdddot} \to C_{1} \stackrel{\partial_{1}}{\to} C_{0} \stackrel{\partial_{0}}{\to} 0$$ 
In this case the maps $\partial_{r}$ are homomorphisms and the composition of two consecutive maps vanish.  We also define:
$$Z_{r}(X) = ker(\partial_{r}) \subseteq C_{r} \; \; \; \textnormal{[Cycles]}$$
$$B_{r}(X) = im(\partial_{r+1}) \subseteq Z_{r} \; \; \; \textnormal{[Boundaries]}$$
The $r^{th}$ singular homology group of X is defined to be the quotient: 
$$H_{r}(X) = Z_{r}(X) \setminus B_{r}(X)$$
This entire sequence is usually denoted by:
$$H_{*}(X)$$
Finally we conclude our presentation of Homology Theory by introducing Singular Relative Homology Groups. Let $X$ be a Topological Space and $A\subseteq X$. The inclusion map $i: A \to X$ induces a homomorphism $i_{*}: C_{q}(A) \to C_{q}(X)$ which is trivially injective. Consider the following quotient group: 
$$C_{q}(X,A) = C_{q}(X) \setminus C_{q}(A)$$
Since $i_{*}$ commutes with $\partial$ we have an induced homomorphism which we still denote by $\partial$. 
$$\partial: C_{q}(X,A) \to C_{q-1}(X,A)$$ In an analogous manner we can define: 
$$Z_{q}(X,A) = ker(\partial_{q}) \; \; \; \textnormal{[Relative cycles]}$$
$$B_{q}(X,A) = im(\partial_{q+1})  \; \; \; \textnormal{[Relative boundaries]}$$ 
We also define the relative Homology Group by the obvious construction:
$$H_{q}(X,A) = Z_{q}(X,A) \setminus B_{q}(X,A)$$
We now turn to reviewing some classical quantities in terms of the Homology Theory we have developed:
\begin{definition} The rank of an Abelian Group G is the maximal number k, for which $\sum_{i=1}^{k} n_{i}g_{i} =0$ with $(n_{i})_{i} \subseteq \mathbb{Z}$ and $(g_{i})_{i} \subseteq G$ implies $g_{i} = 0$  for all i. 
\end{definition}
\begin{definition} Given a pair of spaces $(B,A)$ and $q \in \mathbb{Z}$ we let: 
$$\beta_{q}(B,A) = rank(H_{q}(B,A))$$
$$\chi(B,A) = \sum (-1)^{q}\beta_{q}(B,A)$$
\end{definition}
Here we are assuming the ranks are finite and in the definition of $\chi(B,A)$ that $\beta_{q}$ is nonzero except for a finite number of q. $\beta_{q}(B,A)$ is called the q-th Betti Number of $(B,A)$ and $\chi(B,A) $ is known as the Euler-Poincar\'{e} characteristic of (B,A). 
\\
\\
We have the following results about the Betti Numbers and Euler-Poincar\'{e} Characteristic:
\begin{lemma} If $A \subseteq B \subseteq C$ and $q \in \mathbb{Z}$ then,
$$\beta_{q}(C,A) \leq \beta_{q}(C,B) + \beta_{q}(B,A)$$
$$\chi(C,A) = \chi(C,B)  + \chi(B,A)$$
Let $\mathcal{B}(B,A) = \beta_{q}(B,A) - \beta_{q-1}(B,A) + _{\spdddot} \pm  \beta_{0}(B,A)$ then,
$$\mathcal{B}_{q}(C,A) \leq \mathcal{B}_{q}(C,B) + \mathcal{B}_{q}(B,A)$$ 
Suppose also that we have the inclusion $X_{0} \subseteq X_{1} \subseteq _{\spdddot} \subseteq X_{n}$ then by induction,
$$\beta_{q}(X_{n},X_{0}) \leq \sum_{i=1}^{n} \beta_{q}(X_{i},X_{i-1})$$
$$ \mathcal{B}(X_{n},X_{0}) \leq  \sum_{i=1}^{n} \mathcal{B}_{q}(X_{i},X_{i-1})$$  
$$ \chi(X_{n},X_{0}) =  \sum_{i=1}^{n} \chi(X_{i},X_{i-1})$$  
\end{lemma}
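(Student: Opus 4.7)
The plan is to reduce everything to the long exact sequence of the triple $(A,B,C)$, namely
$$\cdots \to H_q(B,A) \to H_q(C,A) \to H_q(C,B) \xrightarrow{\partial} H_{q-1}(B,A) \to \cdots,$$
which can be extracted from Axiom~4 applied to the pairs $(B,A)$, $(C,A)$, $(C,B)$ together with a diagram chase (this is the standard zig-zag construction). Once this sequence is available, the three claims for the triple reduce to purely algebraic facts about ranks in a bounded exact sequence of finitely generated abelian groups.

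For the Betti inequality, I would extract the three-term exact subsequence $H_q(B,A) \xrightarrow{\alpha} H_q(C,A) \xrightarrow{\beta} H_q(C,B)$. Rank-nullity gives $\operatorname{rank} H_q(C,A) = \operatorname{rank}(\ker\beta) + \operatorname{rank}(\operatorname{im}\beta)$, and exactness at $H_q(C,A)$ together with $\operatorname{im}\beta \subseteq H_q(C,B)$ yields $\operatorname{rank}(\ker\beta)=\operatorname{rank}(\operatorname{im}\alpha)\le \beta_q(B,A)$ and $\operatorname{rank}(\operatorname{im}\beta)\le\beta_q(C,B)$. For the Euler characteristic equality, I would invoke the general principle that in a bounded exact sequence the alternating sum of ranks vanishes, applied directly to the full long exact sequence of the triple; the groups $H_q(B,A)$, $H_q(C,A)$, $H_q(C,B)$ contribute with signs $+,-,+$, which rearranges to $\chi(B,A)-\chi(C,A)+\chi(C,B)=0$.

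The $\mathcal{B}_q$ inequality is the subtlest piece and will need a careful truncation. Let $I_q := \operatorname{im}\bigl(\partial : H_{q+1}(C,B)\to H_q(B,A)\bigr)$; by exactness this is $\ker(H_q(B,A)\to H_q(C,A))$. Replacing $H_q(B,A)$ by $H_q(B,A)/I_q$ yields a bounded exact sequence running from degree $q$ down to degree $0$, so the alternating sum of the ranks of its terms is zero. Collecting the terms by type, this equation reads
$$\bigl[\mathcal{B}_q(B,A)-\operatorname{rank}(I_q)\bigr]-\mathcal{B}_q(C,A)+\mathcal{B}_q(C,B)=0,$$
which, because $\operatorname{rank}(I_q)\ge 0$, gives exactly $\mathcal{B}_q(C,A)\le \mathcal{B}_q(B,A)+\mathcal{B}_q(C,B)$.

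For the statements about the chain $X_0\subseteq X_1\subseteq\cdots\subseteq X_n$, I would induct on $n$: the base case $n=1$ is trivial, and the inductive step applies the triple inequality/equality to $X_0\subseteq X_{n-1}\subseteq X_n$ and then uses the inductive hypothesis on $X_0\subseteq\cdots\subseteq X_{n-1}$. The main obstacle is the $\mathcal{B}_q$ argument: one must carefully justify the truncation, verify that the resulting alternating sum picks up precisely the leftover image term $\operatorname{rank}(I_q)$ with the correct sign, and check that the assumed finiteness of ranks propagates to the quotient $H_q(B,A)/I_q$. Everything else is bookkeeping.
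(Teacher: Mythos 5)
The paper itself gives no proof of this lemma at all — it is stated as a black-box algebraic fact — so there is nothing to compare your argument against directly. Your proof is correct and is the standard one: assemble the long exact sequence of the triple from Axiom~4, then use that the alternating sum of ranks in a bounded exact sequence of finitely generated abelian groups vanishes. Two remarks. First, for the $\chi$ identity and the $\mathcal{B}_q$ truncation you implicitly need the sequence to terminate at the bottom end, i.e.\ $H_{-1}=0$; this is not formally a consequence of the axioms as the paper states them, but it does hold for singular homology, which is what the paper constructs and uses, so it is fine — worth a one-line acknowledgment. Second, your handling of the $\mathcal{B}_q$ inequality is the part where a naive attempt would go wrong, and you handle it correctly: truncating at $H_q(B,A)$ and noting $\operatorname{rank}(H_q(B,A)/I_q)=\beta_q(B,A)-\operatorname{rank}(I_q)$ (rank is additive on $0\to I_q \to H_q(B,A)\to H_q(B,A)/I_q\to 0$), the alternating sum with the pattern you describe gives exactly $\mathcal{B}_q(B,A)-\operatorname{rank}(I_q)-\mathcal{B}_q(C,A)+\mathcal{B}_q(C,B)=0$, and the sign on $\operatorname{rank}(I_q)$ is indeed the one that makes the inequality come out the right way. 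The induction over the chain $X_0\subseteq\cdots\subseteq X_n$ is routine. No gaps.
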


Having reviewed the Algebraic Topological perspective about Manifolds, let us now return to the classical theorems of Morse Theory. 
\\
\\
The first classical result about Morse Theory makes precise the first observation we made at the beginning of the section: 
\begin{proposition} Let M be a compact, finite dimensional manifold, and let $f: M \to \mathbb{R}$ be a function of class $C^{2}$. Let $a,b \in \mathbb{R}$ with $a < b$ and assume that $\{a\leq f \leq b \}$ contains no critical points of f. Then $M^{a}$ is a deformation retract of $M^{b}$.
\end{proposition}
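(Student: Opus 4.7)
The plan is to construct an explicit deformation retraction using the flow of a suitably rescaled negative gradient vector field. The motivating idea is that in the strip $f^{-1}([a,b])$, since there are no critical points, $\nabla f$ is nonzero; by rescaling it we can produce a flow that decreases $f$ at unit speed, and by flowing for time $b - a$ we push $f^{-1}(b)$ down onto $f^{-1}(a)$ while fixing points already in $M^a$.

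First I would set up the vector field. Since $\{a \le f \le b\}$ is compact and contains no critical points, $|\nabla f|^2$ is bounded below by some $c > 0$ on this set. Choose a slightly larger open neighborhood $U$ of $\{a \le f \le b\}$ on which $\nabla f \neq 0$, and a smooth bump function $\rho: M \to [0,1]$ that equals $1$ on $\{a \le f \le b\}$ and is compactly supported in $U$. Define the smooth vector field
\[
X = -\rho(x)\,\frac{\nabla f}{|\nabla f|^2}
\]
on $U$ and extend by zero elsewhere. Because $M$ is compact, the flow $\varphi_t$ of $X$ is defined for all $t \in \mathbb{R}$. The key computation is that along an integral curve staying inside $\{a \le f \le b\}$ one has
\[
\frac{d}{dt} f(\varphi_t(x)) = df(X) = -\rho(\varphi_t(x)) = -1,
\]
so $f$ decreases at unit speed until the trajectory reaches the level set $f = a$, at which point we are free to stop.

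Next I would assemble the deformation. Define $h: [0,1] \times M^b \to M^b$ by
\[
h(t,x) = \varphi_{\,t\,(f(x) - a)^+}(x),
\]
where $(\cdot)^+$ denotes positive part, so that $t(f(x)-a)^+$ is the amount of time we need to flow. Then $h(0,\cdot) = \mathrm{id}_{M^b}$; for $x \in M^a$ we have $(f(x)-a)^+ = 0$, making $h(t,x) = x$ for all $t$, so $M^a$ is fixed throughout; and for $x \in M^b$ with $f(x) > a$, flowing for time $f(x) - a$ drops the value of $f$ from $f(x)$ to $a$, so $h(1,x) \in M^a$. Continuity of $h$ follows from smoothness of $\varphi$ and continuity of $f$, and $h$ takes values in $M^b$ because $f$ is nonincreasing along the flow.

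The main technical obstacle is ensuring that the flow behaves correctly at the interface, namely that nothing goes wrong for points $x$ with $f(x)$ just above $a$ or just equal to $b$, and that the deformation glues continuously with the identity on $M^a$. Using the positive-part formulation handles the $M^a$ boundary automatically; for points starting with $f(x) = b$ we must verify the trajectory does not leave $U$ before reaching $f = a$, which is immediate since $f$ strictly decreases along the flow and thus $\varphi_s(x)$ stays within $\{a \le f \le b\} \subset U$ for $0 \le s \le f(x)-a$. This confirms all three requirements of a deformation retraction and completes the argument.
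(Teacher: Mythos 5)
Your proof is correct and follows the same route as the paper, which only sketches the idea in one sentence (``use a gradient flow to construct an explicit homotopy between $M^a$ and $M^b$''). You have supplied the details the paper leaves implicit: the rescaling $X = -\rho\,\nabla f/|\nabla f|^2$ so that $f$ decreases at unit speed, the cutoff $\rho$ to make $X$ globally defined, the completeness of the flow on a compact manifold, the explicit formula $h(t,x)=\varphi_{t(f(x)-a)^+}(x)$, and the check that trajectories stay in the strip where $\rho\equiv 1$.
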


The idea of the proof is to use a gradient flow to construct an explicit homotopy (deformation retraction) between $M^{a}$ and $M^{b}$. The $C^{2}$ assumption is what allows the gradient flow to have a local solution. 
\\
\\
Now if upon varying the level sets we do encounter a critical point, the fundamental result in this direction tells us that the resulting level set after deforming has the homotopy type of the original level set with a $\lambda$-dimensional cell attached. Furthermore the dimension of the cell is precisely the Morse Index of the critical point. 
\begin{proposition} Suppose p is a nondegenerate critical point of f with Morse Index $\mu$ and suppose there exists $\epsilon >0$ such that f has no critical points between $(c-\epsilon, c+\epsilon)$ (except p) where $c = f(p)$. Then $M^{c+\epsilon}$ has the homotopy type of $M^{c-\epsilon}$ with a $\mu$-dimensional cell attached. 
\end{proposition}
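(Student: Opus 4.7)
The plan is to combine the Morse Lemma with a controlled perturbation of $f$, then invoke the previous proposition on deformation retracts of noncritical sublevel sets to reduce everything to an explicit model computation in local coordinates. First I would apply the Morse Lemma to produce coordinates $(y_1,\ldots,y_n)$ on a neighborhood $U$ of $p$ in which $f(y) = c - |u|^2 + |v|^2$, with $u = (y_1,\ldots,y_\mu)$ and $v = (y_{\mu+1},\ldots,y_n)$. After shrinking $\epsilon$ if necessary, I may assume the closed ball $\{|u|^2 + |v|^2 \leq 2\epsilon\}$ lies in $U$. The cell to be attached is $e^\mu := \{(u,0) : |u|^2 \leq \epsilon\}$, a $\mu$-disk whose boundary sphere lies in $f^{-1}(c-\epsilon) \subseteq M^{c-\epsilon}$.

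Next I would choose a smooth bump $\rho : \mathbb{R} \to [0,\infty)$ with $\rho(0) > \epsilon$, $\rho(s) = 0$ for $s \geq 2\epsilon$, and $-1 < \rho'(s) \leq 0$ for all $s$, and use it to define a perturbed function $F : M \to \mathbb{R}$ by $F := f - \rho(|u|^2 + 2|v|^2)$ on $U$ and $F := f$ outside (the support condition on $\rho$ makes the two definitions agree on the overlap). The argument then rests on four checks:
\begin{enumerate}[(i)]
\item $F \leq f$ everywhere, so $M^{c+\epsilon} \subseteq F^{-1}((-\infty, c+\epsilon])$.
\item Conversely $F^{-1}((-\infty, c+\epsilon]) \subseteq M^{c+\epsilon}$: if $y \in U$ satisfies $f(y) > c+\epsilon$ then $|v|^2 > \epsilon$, hence $|u|^2 + 2|v|^2 > 2\epsilon$, so $\rho = 0$ at $y$ and $F(y) = f(y) > c+\epsilon$.
\item The only critical point of $F$ in $U$ is $p$, since $\partial_u F = -2u(1+\rho')$ and $\partial_v F = 2v(1-2\rho')$ have second factors lying in $(0,1]$ and $[1,3)$ respectively, hence never vanishing.
\item $F(p) = c - \rho(0) < c - \epsilon$.
\end{enumerate}

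Consequently $F$ has no critical values in $[c-\epsilon, c+\epsilon]$, so by the previous proposition applied to $F$ the set $M^{c+\epsilon} = F^{-1}((-\infty, c+\epsilon])$ deformation retracts onto $F^{-1}((-\infty, c-\epsilon])$. It remains to deformation retract $F^{-1}((-\infty, c-\epsilon])$ onto $M^{c-\epsilon} \cup e^\mu$. Outside $U$ the two sets coincide, and inside $U$ I would write down an explicit piecewise homotopy which, for each fixed $u$, first linearly contracts the $v$-coordinate to $0$ and then either leaves the resulting point on the core disk $e^\mu$ (when $|u|^2 \leq \epsilon$) or slides it outward along the $u$-ray into $f^{-1}(c-\epsilon) \subseteq M^{c-\epsilon}$ (when $|u|^2 > \epsilon$).

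The main obstacle, and the step requiring the most care, is this final explicit deformation retraction. The homotopy has to be continuous at the ``seam'' $|u|^2 = \epsilon + |v|^2$ where the handle attaches to $M^{c-\epsilon}$, it must equal the identity on $M^{c-\epsilon}$ throughout $[0,1]$, and the two regimes $|u|^2 \leq \epsilon$ versus $|u|^2 > \epsilon$ must glue together continuously across $|u|^2 = \epsilon$, $v = 0$. Once this is verified, composing with the deformation retraction supplied by (i)--(iv) yields the desired homotopy equivalence between $M^{c+\epsilon}$ and $M^{c-\epsilon} \cup e^\mu$.
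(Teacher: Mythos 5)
Your proposal reconstructs, essentially verbatim, Milnor's proof of this result (Theorem 3.2 of \emph{Morse Theory}); the paper itself does not prove the proposition at all -- it only gestures at the strategy (``local description of the Morse function\ldots results of Whitehead\ldots or a variation of the Mayer--Vietoris argument'') and moves on. So you have supplied a genuine argument where the paper supplies a pointer. Your four checks (i)--(iv) are correct: in (ii), $f(y) > c+\epsilon$ forces $|v|^2 > \epsilon + |u|^2 \geq \epsilon$, whence $|u|^2 + 2|v|^2 \geq 2|v|^2 > 2\epsilon$ and $\rho$ vanishes there; in (iii) the factors $1+\rho' \in (0,1]$ and $1-2\rho' \in [1,3)$ indeed never vanish, so $p$ is the unique critical point of $F$ in $U$; and (i), (iv) are immediate. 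The one minor gap worth flagging is that the hypothesis only rules out critical points of $f$ with values in the open interval $(c-\epsilon, c+\epsilon)$; to apply the previous proposition to $F$ on the closed band $[c-\epsilon, c+\epsilon]$ you should either shrink $\epsilon$ so that $c\pm\epsilon$ are regular values, or observe that $F$ agrees with $f$ at any such outside critical point and appeal to a slightly smaller band.

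The one place where your sketch departs from what actually works is the final deformation retraction of $F^{-1}((-\infty, c-\epsilon])$ onto $M^{c-\epsilon}\cup e^\mu$. You describe it as contracting $v$ to $0$ and then, when $|u|^2 > \epsilon$, ``sliding outward along the $u$-ray'' into $f^{-1}(c-\epsilon)$. That cannot be right: any homotopy that moves $u$ will fail to be the identity on $M^{c-\epsilon}$, which is required for a deformation retraction. In Milnor's argument the $u$-coordinate is \emph{never} moved. The retraction is split into three regions of $H := F^{-1}((-\infty,c-\epsilon]) \cap U$. On $\{|u|^2 \leq \epsilon\}$ one sends $(u,v) \mapsto (u, tv)$, collapsing $v$ onto the core disk $e^\mu$. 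On $\{\epsilon \leq |u|^2 \leq |v|^2 + \epsilon\}$ one sends $(u,v) \mapsto (u, s_t v)$ with $s_t = t + (1-t)\sqrt{(|u|^2 - \epsilon)/|v|^2}$, which at $t=0$ lands exactly on $f^{-1}(c-\epsilon)$ (a contraction of $v$, not an expansion of $u$). On $\{|u|^2 \geq |v|^2 + \epsilon\} = M^{c-\epsilon}\cap U$ one takes the identity. Continuity across $|u|^2 = \epsilon$ holds because $s_t \to t$ there, and across $|u|^2 = |v|^2 + \epsilon$ because $s_t \to 1$ there; this is exactly the ``seam'' you correctly worried about. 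With this correction your argument is complete and correct.
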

One can construct this handle using the local description of the Morse function, where the attaching map is between the boundary of the $\mu$-dimensional cell and $\partial M^{c-\epsilon}$.  One either uses Homotopy type arguments using technical results of Whitehead to obtain a cell complex, or a variation of the Mayer-Viteoris argument. As a corollary we get:
\begin{corollary} Let p be a nondegenerate critical point of f with Morse Index $\mu$. Let $c = f(p)$ and assume it is the only critical point with energy level c. Then for sufficiently small $\epsilon$:
$$ H_{q}(M^{c+\epsilon},M^{c-\epsilon}) =  
\begin{cases}
\mathbb{Z}, & \text{for }q = \mu \\
0, & \text{for} \; q \neq \mu
\end{cases}
$$
\end{corollary}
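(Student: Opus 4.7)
My plan is to leverage Proposition~2 directly: it says $M^{c+\epsilon}$ has the homotopy type of $M^{c-\epsilon}$ with a $\mu$-cell $e^{\mu}$ attached along its boundary sphere $S^{\mu-1} \subseteq \partial M^{c-\epsilon}$. Since homology is a homotopy invariant (Axiom~5), I may replace $(M^{c+\epsilon}, M^{c-\epsilon})$ by $(M^{c-\epsilon} \cup_{\varphi} e^{\mu},\, M^{c-\epsilon})$ without changing the groups $H_q$. The whole computation is then reduced to a standard relative homology calculation for a CW-pair.

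The next step is to strip away the piece $M^{c-\epsilon}$ by excision (Axiom~6). Concretely, I would choose $U \subseteq M^{c-\epsilon}$ to be a small open collar neighborhood of the attaching sphere whose closure lies in the interior of $M^{c-\epsilon}$, so that removing $U$ from the pair leaves me with the pair $(\bar{e}^{\mu}, \partial e^{\mu})$ up to homotopy equivalence. Excision then gives
$$H_q(M^{c+\epsilon}, M^{c-\epsilon}) \;\cong\; H_q(D^{\mu}, S^{\mu-1}),$$
where $D^{\mu}$ is the closed $\mu$-disk and $S^{\mu-1}$ its boundary. The slight subtlety here is that excision as stated requires an open set with closure in the interior of the subspace; one handles this by passing to the mapping cylinder or by thickening the attaching region slightly, a standard maneuver I would carry out but not belabor.

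Finally, I would compute $H_q(D^{\mu}, S^{\mu-1})$ from the long exact sequence of the pair (Axiom~4). Since $D^{\mu}$ is contractible, $H_q(D^{\mu}) = H_q(\{p\})$, which by Axiom~7 is $\mathbb{Z}$ for $q=0$ and $0$ otherwise. Plugging this into
$$\cdots \to H_q(S^{\mu-1}) \to H_q(D^{\mu}) \to H_q(D^{\mu}, S^{\mu-1}) \stackrel{\partial}{\to} H_{q-1}(S^{\mu-1}) \to H_{q-1}(D^{\mu}) \to \cdots$$
and using the previously computed homology of $S^{\mu-1}$ (which the excerpt established via Mayer--Vietoris and the deformation retraction lemma), the connecting map $\partial$ becomes an isomorphism onto the reduced homology of $S^{\mu-1}$ in positive degrees, yielding $\mathbb{Z}$ exactly when $q = \mu$ and $0$ otherwise. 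The edge cases $q \in \{0,1\}$ and the degenerate case $\mu = 0$ (where one must check that attaching a $0$-cell to a nonempty $M^{c-\epsilon}$ produces the correct $\mathbb{Z}$ in degree $0$) need to be verified by hand from the exact sequence.

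The main obstacle is the excision step: matching the hypothesis $\bar{U} \subseteq \mathrm{int}(A)$ against the actual geometric situation, where the attaching sphere lies on the topological boundary of $M^{c-\epsilon}$ rather than strictly inside. I would handle this by replacing $M^{c-\epsilon}$ with a slightly enlarged homotopy-equivalent neighborhood (using Proposition~1 on a level set just below $c-\epsilon$) before applying excision; everything downstream is then a purely formal consequence of the axioms.
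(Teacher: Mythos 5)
Your argument is correct and is essentially the one the paper leaves implicit: the paper simply presents this as an immediate corollary of Proposition~2, and your chain of homotopy invariance, excision to reduce $(M^{c+\epsilon},M^{c-\epsilon})$ to $(D^{\mu},S^{\mu-1})$, and the long exact sequence of the pair is exactly the standard way to cash that in using the Eilenberg--Steenrod axioms the paper has set up. You also correctly flag the collar/thickening step needed to satisfy the excision hypothesis $\bar U\subseteq\operatorname{int}(A)$ and the degenerate $\mu=0$ case, which are the only points where care is required.
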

Note that this result can be strengthened to the case where the function $f$ has $z_{1}, _{\spdddot},z_{m}$ critical points at the level c and all of them are non-degenerate and of Morse index $\mu_{1}, _{\spdddot}, \mu_{m}$. Then $M^{c+\epsilon}$ has the homotopy type of $M^{c-\epsilon}$ with m cells attached, each having the dimension of  the respective Morse Index of the critical point under consideration. Moreover for $\epsilon$ sufficiently small, $H_{q}(M^{c+\epsilon},M^{c-\epsilon}) \simeq \mathbb{Z}^{i_{q}}$ where $i_{q} = \textnormal{number of critical points with Morse Index q}$. 	
\\

We now arrive at the central mathematical statement of this paper:
\begin{theorem} (Morse Inequalities) Let M be a compact, finite dimensional manifold, with a Morse Function $f:M \to \mathbb{R}$. Then for any non-negative integer q we have the following relations:
$$M_{p}(f) \geq \beta_{p}(M) \; \; \; \textnormal{[Weak Morse Inequalities]}$$
$$  \sum_{i \geq 0} (-1)^{q-i} M_{i} \geq \sum_{i \geq 0} (-1)^{q-i} \beta_{i} \; \; \; \textnormal{[Strong Morse Inequalities]}$$
$$  \sum_{i \geq 0} (-1)^{i} M_{i} = \chi(M) \;  \; \textnormal{[Morse Index Theorem]}$$
Where $M_{p}(f)$ is the number of critical points of f with Morse Index equal to p. 
\end{theorem}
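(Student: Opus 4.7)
My plan is to construct a filtration of $M$ by sublevel sets that crosses one critical value at a time, and then invoke the subadditivity of $\beta_q$ and $\mathcal{B}_q$ provided by Lemma 3. First I would enumerate the critical values $c_1 < c_2 < \cdots < c_N$ (finite by the Morse Lemma together with compactness of $M$), pick regular values $a_0 < c_1 < a_1 < c_2 < \cdots < c_N < a_N$, and set $X_i := M^{a_i}$. This gives a filtration $X_0 \subseteq X_1 \subseteq \cdots \subseteq X_N$ with $X_0 = \varnothing$ (since $a_0$ lies below the global minimum of $f$) and $X_N = M$. By the strengthened form of the Corollary (noted in the remark after Proposition 2), $\beta_q(X_i, X_{i-1}) = m_q^i$, where $m_q^i$ denotes the number of critical points at level $c_i$ of Morse index $q$, and summing over $i$ gives $\sum_i m_q^i = M_q(f)$.

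With this data in hand, all three inequalities follow by applying the iterated subadditivity in Lemma 3. For the weak inequality,
\[ \beta_q(M) = \beta_q(X_N, X_0) \leq \sum_{i=1}^N \beta_q(X_i, X_{i-1}) = M_q(f). \]
For the strong inequality, observe that $\mathcal{B}_q(X_i, X_{i-1}) = \sum_{j=0}^q (-1)^{q-j} m_j^i$, so the analogous subadditivity of $\mathcal{B}_q$ yields
\[ \sum_{j=0}^{q} (-1)^{q-j} \beta_j(M) = \mathcal{B}_q(M) \leq \sum_{i=1}^N \mathcal{B}_q(X_i,X_{i-1}) = \sum_{j=0}^{q} (-1)^{q-j} M_j(f). \]
Finally, the additivity of $\chi$ in Lemma 3 gives
\[ \chi(M) = \sum_{i=1}^N \chi(X_i, X_{i-1}) = \sum_{j \geq 0} (-1)^j M_j(f), \]
which is the Morse Index Theorem.

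The main obstacle is structural rather than computational: I need the relative homology identification $\beta_q(X_i,X_{i-1}) = m_q^i$ to remain valid when several non-degenerate critical points share the same level $c_i$. This can be handled either by the strengthened Corollary (which already combines the individual handle attachments via a local Mayer--Vietoris argument) or, if one prefers a self-contained route, by a small $C^2$-perturbation of $f$ that separates the critical values while preserving both the number and the indices of the critical points, since nondegeneracy is $C^2$-open. Beyond this point the argument is pure bookkeeping: the genuine topological content has already been packaged into Proposition 1, Proposition 2, and Lemma 3.
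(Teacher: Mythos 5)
Your proposal is correct and follows essentially the same route as the paper: build the filtration of $M$ by sublevel sets with regular values interleaving the critical levels, use Proposition 1 together with the strengthened Corollary to identify $\beta_q(X_i, X_{i-1})$ with the number of index-$q$ critical points at level $c_i$, and then invoke the subadditivity of $\beta_q$ and $\mathcal{B}_q$ and the additivity of $\chi$ from Lemma 3. Your added remark about using a $C^2$-small perturbation to separate critical values is a reasonable alternative to the strengthened Corollary but is not needed once that Corollary is in hand.
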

\begin{proof}
Let $c_{1} < c_{2} < _{\spdddot} < c_{k}$ denote the critical levels of f, which are only finitely many by the compactness of our Manifold. Choose real numbers $\alpha_{0}, \alpha_{1}, _{\spdddot}, \alpha_{k}$ such that $\alpha_{i} < c_{i} < \alpha_{i+1}$ $\forall \; 0 \leq i \leq k-1$. In particular we have $M^{\alpha_{0}} = \varnothing$ and $M^{\alpha_{k}} = M$. By  Proposition 1, we have that for any integers $i,q$ and any small $\epsilon >0$ $H_{q}(M^{c_{i}+\epsilon},M^{c_{i}-\epsilon}) \simeq H_{q}(M^{a_{i}},M^{a_{i-1}})$. Hence applying the above remark following the corollary and Lemma 3, we conclude the proof.   
\end{proof}
Before leaving the subject let us just mention an alternate way to describe the classical Morse Inequalities. 
\begin{definition} The Poincar\'{e} Polynomial of M is defined to be 
$$ P_{t}(M) = \sum_{k = 0}^{m} \beta_{k}t^{k}$$
and the Morse Polynomial of f is defined to be
$$N_{t}(f) = \sum_{k = 0}^{m} M_{k}t^{k}$$
\end{definition}
\begin{theorem}(Polynomial Morse Inequalities) For any Morse function f on a smooth manifold M
$$N_{t}(f) = P_{t}(M) + (1+t)Q(t)$$
Where $Q(t)$ is a polynomial with non-negative integer coefficients.
\end{theorem}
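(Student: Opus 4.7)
The plan is to deduce this polynomial identity directly from the Strong Morse Inequalities and the Morse Index Theorem that were just established. Informally, the polynomial version is nothing but a compact repackaging of those two statements: the $(1+t)$ factor records the alternating-sum structure of the strong inequalities, while the polynomiality of $Q$ encodes the equality in the Morse Index Theorem. So my strategy is to produce $Q$ explicitly, verify the coefficient-by-coefficient identity, and then check that $Q$ terminates.

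First, I would define
$$q_i = \sum_{k=0}^{i} (-1)^{i-k}\bigl(M_k - \beta_k\bigr), \qquad Q(t) = \sum_{i \geq 0} q_i\, t^i.$$
Each $q_i$ is an integer, and the Strong Morse Inequalities from Theorem 2 assert exactly that $q_i \geq 0$ for every $i$. A short telescoping computation gives the recursion
$$q_i + q_{i-1} = M_i - \beta_i \qquad (\text{with the convention } q_{-1} = 0),$$
which is precisely the statement that the coefficient of $t^i$ in $(1+t)Q(t)$ matches the coefficient of $t^i$ in $N_t(f) - P_t(M)$. This establishes the identity formally as power series.

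Next, I would verify that $Q$ is genuinely a polynomial, i.e., that $q_i$ eventually vanishes. For $i > m := \dim M$, both $M_i$ and $\beta_i$ vanish, so the recursion reduces to $q_i = -q_{i-1}$. It therefore suffices to show $q_m = 0$. Pulling out the sign, $q_m = (-1)^m \sum_{k=0}^{m}(-1)^k(M_k - \beta_k)$, and by the Morse Index Theorem together with the definition $\chi(M) = \sum_k (-1)^k \beta_k$, the inner sum equals $\chi(M) - \chi(M) = 0$. Hence $q_m = 0$, and $Q(t)$ is a polynomial of degree at most $m - 1$ with non-negative integer coefficients.

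The main difficulty here is conceptual rather than technical: the proof is essentially an algebraic repackaging once Theorem 2 is in hand, and the only substantive issue is recognizing that the non-negativity and termination of $Q$ correspond exactly to the Strong Morse Inequalities and the Morse Index equality, respectively. Beyond that, one should verify carefully that the telescoping step for $q_i + q_{i-1}$ is computed correctly, since an off-by-one sign error is the only thing that could realistically go wrong.
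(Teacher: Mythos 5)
Your proposal is correct and follows essentially the same route as the paper's $(\Rightarrow)$ direction: both derive non-negativity of the coefficients $q_k = \sum_{i\leq k}(-1)^{k-i}(M_i - \beta_i)$ from the Strong Morse Inequalities, and both obtain polynomiality from the Morse Index Theorem (the paper by noting that $t=-1$ is a root of $N_t - P_t$ and invoking Euclidean division, you by showing the explicit coefficient $q_m$ vanishes). The only difference is presentational — the paper works top-down (divide first, then identify coefficients), while you build $Q$ bottom-up from its coefficient formula and then check the telescoping identity and termination separately.
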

Instead of proving this result directly we instead prove that this statement is equivalent to the Morse Inequalities we proved above.
\begin{theorem} Morse inequalities $\iff$ Polynomial Morse Inequalities
\end{theorem}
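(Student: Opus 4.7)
The plan is to compare coefficients on both sides of the polynomial identity and translate it directly into the linear relations already encoded in the Morse Inequalities. Writing $Q(t) = \sum_{k \geq 0} q_k t^k$, the identity $N_t(f) = P_t(M) + (1+t)Q(t)$ is equivalent, term by term, to the family of relations
$$M_k \;=\; \beta_k + q_k + q_{k-1}, \qquad k \geq 0, \quad q_{-1}:=0,$$
which I would solve recursively to get the closed form $q_k = \sum_{i=0}^{k} (-1)^{k-i}(M_i - \beta_i)$. The crucial observation is that this is exactly the alternating sum that appears in the Strong Morse Inequalities, so the condition $q_k \geq 0$ is literally the content of the Strong Morse Inequalities.

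For the direction (Polynomial $\Longrightarrow$ Morse), I would assume $N_t = P_t + (1+t)Q$ with $Q$ having non-negative integer coefficients. Then $q_k \geq 0$ is the Strong Morse Inequality at level $k$; the Weak Morse Inequality $M_k \geq \beta_k$ follows at once from $M_k - \beta_k = q_k + q_{k-1} \geq 0$. Finally, substituting $t = -1$ in the polynomial identity kills the factor $(1+t)$ and gives $N_{-1}(f) = P_{-1}(M)$, which is the Morse Index Theorem $\sum_i (-1)^i M_i = \chi(M)$.

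For the reverse direction (Morse $\Longrightarrow$ Polynomial), I would define $q_k$ by the closed form above and set $Q(t) = \sum_{k} q_k t^k$. The Strong Morse Inequalities guarantee $q_k \in \mathbb{Z}_{\geq 0}$, and a direct telescoping computation using $q_k + q_{k-1} = M_k - \beta_k$ shows $N_t(f) - P_t(M) = (1+t)Q(t)$ as formal series.

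The main obstacle is verifying that $Q(t)$ is actually a \emph{polynomial}, i.e.\ that $q_k = 0$ for $k$ sufficiently large. Beyond $n = \dim M$ we have $M_k = \beta_k = 0$, so the recursion forces $q_k = -q_{k-1}$ for $k > n$, and everything reduces to showing $q_n = 0$. This is exactly the Morse Index Theorem rewritten as $\sum_{i=0}^{n}(-1)^{n-i}(M_i - \beta_i) = 0$; alternatively, it is squeezed out by applying the Strong Morse Inequality at $q = n$ (which gives $q_n \geq 0$) and at $q = n+1$ (which gives $-q_n \geq 0$). So the Morse Index Theorem plays the essential role of truncating the a priori formal series $Q$ into a polynomial of the correct degree, and this is the only place where the equality half of the Morse Inequalities is actually used.
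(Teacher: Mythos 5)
Your proof is correct and follows essentially the same coefficient-matching strategy as the paper: the identity $N_t = P_t + (1+t)Q$ is unpacked into $M_k = \beta_k + q_k + q_{k-1}$, the closed form $q_k = \sum_{i=0}^k(-1)^{k-i}(M_i - \beta_i)$ is extracted, and $q_k \geq 0$ is recognized as the Strong Morse Inequality at level $k$. The one genuine structural difference is how the two arguments establish that $Q$ is actually a polynomial. The paper invokes the Euclidean Division Algorithm: the Morse Index Theorem says $t=-1$ is a root of $N_t - P_t$, so division by $(1+t)$ immediately produces a polynomial quotient $Q$ of degree at most $n-1$, and termination never needs to be discussed. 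You instead define $q_k$ by the recursion for all $k$, which a priori yields a formal power series, and then must separately argue that it truncates; you correctly identify that $q_k = -q_{k-1}$ for $k > n$ so everything hinges on $q_n = 0$, which is exactly $\chi(M) = \sum(-1)^i M_i$. Both routes are valid and rely on the same inputs; your version makes more explicit exactly where the equality part of the Morse relations is indispensable, at the cost of one extra verification that the paper's use of polynomial division absorbs automatically. Your added observation that $M_k - \beta_k = q_k + q_{k-1} \geq 0$ cleanly recovers the Weak Morse Inequalities is a nice touch that the paper leaves implicit.
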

\begin{proof}
$$ (\implies)$$
Notice that $N_{-1}(f) = P_{-1}(M)$. Hence $t = -1$ is a root of the polynomial $N_{t}(f) - P_{t}(M)$. By the Euclidean Division Algorithm this implies that $P_{t}(M) + (1+t)Q(t) = N_{t}(f)$ for some polynomial $Q(t) = \sum_{k = 0}^{n-1}q_{k}t^{k}$. It is clear that $q_{k} \in \mathbb{Z}$ $\forall k = 0, _{\spdddot}, n-1$ since both $N_{t}(f)$ and $P_{t}(M)$ have integer coefficients. It remains to show that the coefficients are all non-negative. 
\\
\\
$P_{t}(M) + (1+t)Q(t) = N_{t}(f)$ implies that $M_{0} = \beta_{0} + q_{0}$. Next we also have $M_{1} = \beta_{1} - \beta_{0} + q_{1}$. Both these results follow from matching coefficients. They imply together $M_{1} = \beta_{1} + q_{1} + M_{0} - \beta_{0}$ or $M_{1} - M_{0} = \beta_{1} - \beta_{0} + q_{1}$. Continuing to match coefficients we get
$$M_{k} - M_{k-1} + _{\spdddot} + (-1)^{k}M_{0} = \beta_{k} - \beta_{k-1} + _{\spdddot} + (-1)^{k}\beta_{0} + q_{k} \; \; \; \forall k \in 0,_{\spddot},n-1$$
By the Strong Morse Inequalities we conclude that $q_{k} \geq 0$ $\forall k = 0, _{\spdddot}, n-1$

$$ (\Longleftarrow) $$

We assume that $P_{t}(M) + (1+t)Q(t) = N_{t}(f)$ for some polynomial $Q(t) = \sum_{k = 0}^{n-1}q_{k}t^{k}$ with non-negative coefficients. This inequality implies:
$$M_{k} - M_{k-1} + _{\spdddot} + (-1)^{k}M_{0} = \beta_{k} - \beta_{k-1} + _{\spdddot} + (-1)^{k}\beta_{0} + q_{k} \; \; \; \forall k k = 0, _{\spdddot}, n-1$$
Since $q_{k} \geq 0$ $\forall k k = 0, _{\spdddot}, n-1$ we get the first of the strong Morse Inequalities. To recover the second equality simply substitute $t = -1$.
\end{proof}

Now that we have shown both the classical Morse Inequalities and their equivalence to the Polynomial Morse Inequalities, the goal for the rest of this section will be to introduce the much needed Geometric Analysis framework to help in understanding the connection Witten makes with Supersymmetric Quantum Mechanics. For this undertaking it will be wise to recall the theory of differential forms.
\\
\\
Given a Homological group one can define a dual notion of a Cohomological group. Above we presented an axiomatic construction of the Homological complex using singular chains. By applying duality arguments one can construct a cochain complex and define analogously cocycles and coboundaries. The heuristic to keep in mind is that the arrows go in the reverse direction. More concretely, for our purposes, we are interested in the de Rham Cohomology group which one can construct using the space of smooth p-forms, $\varOmega^{p}(M)$, and the exterior derivative $d$. In fact we have the following cochain complex:    
$$0\to  \varOmega^{0}(M)  \stackrel{d}{\to} \varOmega^{1} \stackrel{d}{\to}  \varOmega^{2} \to _{\spdddot} \to \varOmega^{n} \stackrel{d}{\to} 0$$
Using the properties of the exterior derivative it can be checked that we have a similar vanishing condition $d \circ d =0$. Hence, we can define a quotient space: 
$$H^{p} = ker(d: \varOmega^{p} \to \varOmega^{p+1}) \setminus im(d: \varOmega^{p-1} \to \varOmega^{p})$$
This is called the pth de Rham cohomology group of M. 
\\
\\
Using the Riemannian Structure of M one can construct a scalar product on each $T_{x}^{*}(M)$.  We define the Hodge Star operator:
$$\star: \varOmega^{p}(M) \to \varOmega^{n-p}(M)$$
Where: 
$$Vol(M) = \int_{M} \star(1)$$
Using this construction we define the $L^{2}$ inner-product of p-forms, $\alpha$ and $\beta$: 
$$<\alpha, \beta> = \int_{M} \alpha \wedge \star \beta$$
The completion of $\varOmega^{p}$ under this $L^{2}$ inner product is a Hilbert Space which we will continue to denote as $\varOmega^{p}$. Using this inner-product one further defines the formal adjoint to the $d$ operator on the space $\bigoplus_{p =o}^{n} \varOmega^{p}(M)$ (a Geometric realization of the Riesz Representation theorem): 
$$\forall \alpha \in \varOmega^{p-1}(M), \; \; \beta \in \varOmega^{p}(M)$$
$$<d\alpha, \beta> = <\alpha, d^{*} \beta>$$
In particular $$d^{*}: \varOmega^{p}(M) \to \varOmega^{p-1}(M)$$ One can check that 
$$d^{*} = (-1)^{d(p+1)+1}\star d \star$$
The combination of $d$ and its adjoint $d^{*}$ allows us to construct arguably the most important operator for our analysis: 
\begin{definition} The Laplace-Beltrami Operator on $\varOmega^{p}(M)$ is 
$$\Delta = dd^{*} + d^{*}d: \varOmega^{p}(M) \to \varOmega^{p}(M)$$ Furthermore $\omega \in \varOmega^{p}(M)$ is called harmonic if: 
$$ \Delta \omega = 0$$
\end{definition}

Using the structure of the inner products it can be verified that the Laplacian is a positive self-adjoint operator on the space of forms. Having defined the fundamental operators on our Riemannian Manifold, we now come to the basic result about Harmonic forms and Cohomology: There exists  a canonical representative in each cohomology class of $H^{p}$. More specifically:
\begin{theorem} (Hodge) Let M be a compact Riemannian Manifold. Then every cohomology class in $H^{p}$ $(0\leq p \leq n)$ contains precisely one harmonic form. 
\end{theorem}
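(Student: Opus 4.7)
The plan is to reduce the theorem to the Hodge orthogonal decomposition of $\varOmega^{p}(M)$, and then extract existence and uniqueness of a harmonic representative from that decomposition. The core of the argument is analytic (elliptic regularity for $\Delta$), and everything else is a short computation with the inner product already introduced above.

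First I would establish the pointwise identification that governs everything else: on a compact $M$, a $p$-form $\omega$ is harmonic if and only if $d\omega = 0$ and $d^{*}\omega = 0$. This is immediate from
\begin{equation*}
\langle \Delta\omega,\omega\rangle = \langle dd^{*}\omega,\omega\rangle + \langle d^{*}d\omega,\omega\rangle = \|d^{*}\omega\|^{2} + \|d\omega\|^{2},
\end{equation*}
using compactness so there are no boundary terms. In particular every harmonic form is closed, so it does represent a cohomology class. Uniqueness within a class is then a two-line calculation: if $\omega_{1}$ and $\omega_{2}$ are harmonic with $\omega_{1}-\omega_{2}=d\eta$, then $\langle d\eta,d\eta\rangle = \langle \eta,d^{*}(\omega_{1}-\omega_{2})\rangle = 0$, so the two representatives coincide.

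The substantive step is existence. For this I would invoke the Hodge decomposition
\begin{equation*}
\varOmega^{p}(M) \;=\; \mathcal{H}^{p} \,\oplus\, d\,\varOmega^{p-1}(M) \,\oplus\, d^{*}\varOmega^{p+1}(M),
\end{equation*}
where $\mathcal{H}^{p}=\ker\Delta$ and the three summands are mutually $L^{2}$-orthogonal. The orthogonality of the summands is a direct consequence of the adjoint relation $\langle d\alpha,d^{*}\beta\rangle = \langle d^{2}\alpha,\beta\rangle = 0$ and of the characterization of harmonic forms established in the first step. The nontrivial content is that these three subspaces actually span $\varOmega^{p}$. Given the decomposition, any closed form $\alpha \in \varOmega^{p}$ writes as $\alpha = h + d\beta + d^{*}\gamma$. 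Applying $d$ and using $d\alpha=0$ together with $d(d\beta)=0$ yields $dd^{*}\gamma = 0$, and then $\langle dd^{*}\gamma,\gamma\rangle = \|d^{*}\gamma\|^{2}=0$ forces $d^{*}\gamma=0$. Thus $\alpha = h + d\beta$, exhibiting the harmonic $h$ as the required representative of the cohomology class of $\alpha$.

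The main obstacle is proving the Hodge decomposition itself, which is the analytic heart of the theorem and cannot be obtained by formal manipulations with the inner product alone. The clean route is to observe that $\Delta$ is a formally self-adjoint, second-order elliptic operator on the sections of a vector bundle over a compact manifold, so elliptic regularity gives a Sobolev-scale parametrix, finite-dimensional kernel $\mathcal{H}^{p}$, and closed range equal to $(\mathcal{H}^{p})^{\perp}$. Producing a Green's operator $G:\varOmega^{p}\to(\mathcal{H}^{p})^{\perp}$ with $\Delta G = G\Delta = \mathrm{Id} - H$ (where $H$ is orthogonal projection onto $\mathcal{H}^{p}$) then yields the decomposition via $\omega = H\omega + d(d^{*}G\omega) + d^{*}(dG\omega)$. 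I would either quote the elliptic theory as a black box or, if space permitted, sketch the parametrix construction via local Fourier analysis patched together with a partition of unity; either way this is the step where all the work lives, while the deduction of the theorem from the decomposition is essentially bookkeeping with the two computations above.
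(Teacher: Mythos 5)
The paper does not prove this theorem at all: immediately after stating it, the author writes that it ``is a remarkable result that one cannot hope to do justice to in such a short paper'' and moves on, treating it as a black box that feeds into the corollary on finite-dimensionality and into the eventual identification $\beta_p = \dim\ker(\Delta|_{\Omega^p})$. So there is no in-paper argument to compare yours against; what you have written is a self-contained proof where the paper has none.

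Your proof is the standard one and it is correct. The computation $\langle\Delta\omega,\omega\rangle = \|d\omega\|^2 + \|d^*\omega\|^2$ gives the closed-and-coclosed characterization of harmonic forms (valid on a compact manifold, where integration by parts has no boundary term); uniqueness within a cohomology class is the two-line orthogonality argument $\|d\eta\|^2 = \langle\eta, d^*(\omega_1-\omega_2)\rangle = 0$; and existence reduces to the Hodge orthogonal decomposition $\varOmega^p = \mathcal{H}^p \oplus d\varOmega^{p-1} \oplus d^*\varOmega^{p+1}$, from which you correctly extract that any closed $\alpha$ has $d^*\gamma$-component zero and hence $\alpha = h + d\beta$. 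You are also right to flag the decomposition itself as the real content: the mutual orthogonality of the summands is formal, but the claim that they span $\varOmega^p$ requires the elliptic machinery — finite-dimensional kernel, closed range, Green's operator with $\Delta G = \mathrm{Id} - H$ — and cannot be obtained by inner-product manipulation alone. If you were to expand this into a complete argument you would need to actually run the elliptic regularity / parametrix construction (or cite it precisely), but as a proof sketch this is accurate and the division of labor between ``formal'' and ``analytic'' steps is placed exactly where it belongs. One small stylistic point: it is worth remarking explicitly that the finite-dimensionality of $\mathcal{H}^p$, which the paper proves separately as a corollary via Rellich compactness, is subsumed in the elliptic package you invoke for the decomposition.
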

This is a remarkable result that one cannot hope to do justice to in such a short paper. Instead as we will see, Witten's result can be seen as a Physical realization of this basic representation. For the time being we content ourself by using this result to obtain the dimension of the de Rham Cohomology groups
\begin{corollary} Let M be a compact, oriented, differentiable Manifold. Then all cohomology groups $H^{p}$ $(0 \leq p \leq n)$ are finite dimensional
\end{corollary}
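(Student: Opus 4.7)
The plan is to deduce finite-dimensionality of $H^{p}$ from the Hodge Theorem by identifying each cohomology class with its unique harmonic representative and then arguing that the space of harmonic $p$-forms $\mathcal{H}^{p}(M) := \ker\bigl(\Delta|_{\Omega^{p}(M)}\bigr)$ is finite-dimensional.

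First, I would observe that the Hodge Theorem gives a well-defined map $\Phi: \mathcal{H}^{p}(M) \to H^{p}(M)$ sending a harmonic form $\omega$ to its de Rham class $[\omega]$. The map is $\mathbb{R}$-linear (trivially), surjective (existence of a harmonic representative in every class), and injective (uniqueness: if a harmonic form is exact, subtracting itself from itself as representatives of the same class forces it to vanish). So $H^{p}(M) \cong \mathcal{H}^{p}(M)$ and the problem reduces to bounding the dimension of the kernel of the Laplacian.

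Second, I would invoke that $\Delta = dd^{*} + d^{*}d$ is a second-order elliptic, formally self-adjoint, non-negative operator on the smooth sections of a vector bundle over the compact manifold $M$. The standard output of elliptic theory on a compact manifold is that such an operator has compact resolvent, hence purely discrete spectrum with eigenvalues of finite multiplicity accumulating only at $+\infty$. In particular, the zero eigenspace $\mathcal{H}^{p}(M)$ is finite-dimensional. Combined with the isomorphism in the previous paragraph, this yields $\dim H^{p}(M) = \dim \mathcal{H}^{p}(M) < \infty$.

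The main obstacle is justifying finite-dimensionality of $\ker \Delta$, which rests on elliptic regularity (harmonic forms are smooth and satisfy a G\aa{}rding inequality bounding a higher Sobolev norm by the $L^{2}$ norm) together with the Rellich--Kondrachov compactness of the inclusion of the relevant Sobolev space into $L^{2}(\Omega^{p})$. Concretely: if $\mathcal{H}^{p}(M)$ contained an infinite $L^{2}$-orthonormal sequence $\{\omega_{k}\}$, G\aa{}rding gives a uniform Sobolev bound, Rellich extracts an $L^{2}$-convergent subsequence, but orthonormality implies $\|\omega_{j}-\omega_{k}\|_{L^{2}}^{2}=2$, a contradiction. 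Since the full analytical apparatus (pseudodifferential calculus or direct parametrix construction) has not been developed in the paper, I would take the spectral-theoretic statement for $\Delta$ on a compact manifold as the black-box input and cite it rather than reprove it here.
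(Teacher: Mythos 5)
Your proposal is correct and, in its final paragraph, reproduces essentially the same argument as the paper: reduce to $\ker\Delta|_{\Omega^p}$ via the Hodge isomorphism, suppose an infinite $L^2$-orthonormal sequence of harmonic forms, apply Rellich--Kondrachov to extract an $L^2$-convergent subsequence, and contradict orthonormality. You actually tighten the paper's exposition by naming the G\aa{}rding-type estimate that supplies the uniform Sobolev bound needed before Rellich--Kondrachov can be invoked, a step the paper leaves implicit in the remark that $d\eta = d^{*}\eta = 0$.
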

\begin{proof}
Suppose by contradiction that $H^{p}$ is infinite dimensional. Then we consider a sequence of harmonic forms $(\eta_{n})_{n\in \mathbb{N}} \subseteq H^{p}(M)$. Specifically: 
$$<\eta_{n}, \eta_{m}> = \delta_{nm} \; \; \forall n,m \in \mathbb{N}$$
By harmonicity we also know $d^{*}\eta = d\eta = 0$. By Rellich-Kondrakov Compactness Theorem we know that upto a subseqence our original sequence converges in $L^{2}$ to some $\eta$. But according to our choice of an orthonormal sequence 
$$ \|\eta_{n} - \eta_{m}\|_{L^{2}} = <\eta_{n}, \eta_{m}>^{2} \geq 1$$ Hence it is not a Cauchy sequence and cannot converge. A contradiction.  
\end{proof}

The dimension of this vector space has a most familiar sounding name:
\begin{definition} The p-th Betti Number of M is defined to be $\beta_{p}(M) = dim(H^{p})$
\end{definition}
The choice of name for this dimension should alert the reader to the fact that a p-th Betti Number has already been defined! But it is a remarkable fact of Homology/Cohomology theory that the two notions coincide! In fact using some intense results from Homological Algebra one can prove the "Universal Coefficients theorem for Cohomology" which gives a noncanonical isomorphism between $H_{i}(M)$ and $H^{i}(M)$. Hence the dimensions are the same and the definitions of the Betti numbers coincide. One final remark to make is that we will consider the p-th Betti number not as the dimension of $H^{p}(M)$ but as the dimension of the kernel of the Laplacian restricted to p-forms. This is one of the many things Hodge's Theorem tells us. 
\\
\\
For our upcoming application to Supersymmetric Quantum Mechanics we fix the following notation:
$$Q = d + d^{*}$$
$$\mathcal{H} = \bigoplus_{p = 0}^{n} \varOmega^{p}$$
$$P_{\varOmega^{p}} = (-1)^{p}$$
Note: 
$$Q^{2} = \Delta$$
$$P^{2} = 1$$
$$\{Q,P\} = 0$$
\\
In the coming attractions we will see that $Q$ will represent a Supersymmetry generator, $\mathcal{H}$ is going to be the Hilbert Space representing our State Space, and $\{\cdot, \cdot \}$ is the anti-commutator defined to be 
$$\{A, B\} = AB + BA$$ 
All of this will hopefully be better explained in the next section. 
 
\section{Quantum Mechanics and 0-Dimensional SUSY}
Quantum Mechanics is in many ways the most accurate Physical Theory we have to date. Two physical consequences of the theory are that, matter behaves in a random way, and matter exhibits wave like properties. More specifically, the behavior of individual particles is intrinsically random, and this randomness is propagated according to the laws of wave mechanics. 
\\
\\
In Quantum Mechanics the state of a particle is described by a complex-valued function of both space and time: $\psi(x,t)$ where $x \in \mathbb{R}^{n}$ and $t \in \mathbb{R}$. Given our two physical assumptions our wave function satisfies the following properties:
\\
a) $|\psi(x,t)|^{2}$ represents the probability distribution of the particle's position. 
\\
b) $\psi(x,t)$ solves a Wave Equation
\\
\\
Because the modulus square of our particle's wave function represents a probability distribution, it seems natural to require that the space of all possible states of our particle should have finite $L^{2}$ norm. Specifically we define the State Space of our particle to be:
$$L^{2}(\mathbb{R}^{n})$$ One immediate property we get is that the State Space has the nice property of being a Hilbert Space. Turning our attention to the second requirement, there is a particular wave equation that has withstood the test of time, as both a Mathematical object and its cordial relationship with the Experimental Physicist (of course we mean to say that in the domain of non-relativistic Quantum Theory it has been experimentally verified). The wave mechanics for our Quantum Mechanical particle is determined by the Schr\"{o}dinger Equation:
$$i\hbar \frac{\partial}{\partial t}\psi = -\frac{\hbar^{2}}{2m} \Delta \psi + V\psi$$
Where $\Delta$ is the Laplacian in Euclidean Space, V is the Potential Energy, $\hbar$ is Planck's constant, and m represents the mass of the particle. 
\\
\\
So if one wishes to understand the dynamics of a Quantum Mechanical particle, one must study the theory of operators on a Hilbert Space. From the analytical perspective the basic mathematical problem is to understand the Cauchy Problem:
$$i\hbar \frac{\partial}{\partial t}\psi = -\frac{\hbar^{2}}{2m} \Delta \psi + V\psi$$
$$ \psi(x,0) = \psi_{0} $$
This is an initial value problem for a Partial Differential Equation defined on a Hilbert Space. What one first looks at is the existence problem for solutions given that our initial data lies in a suitable function space. What is important for this question is a subtle notion of Self-Adjointness. The condition basically translates to the well known symmetric property for operators, formulated in terms of the inner product, along with some technical conditions regarding the operators domain. A fundamental result in the Mathematical theory of Quantum Mechanics gives us the conditions under which we expect a solution to the Cauchy Problem:
\begin{theorem} (Existence of Dynamics) We say that dynamics exists if the Cauchy Problem has a unique solution which conserves probability. Dynamics exists if and only if $H =  -\frac{\hbar^{2}}{2m} \Delta + V$ is a self-adjoint operator on our State Space.
\end{theorem}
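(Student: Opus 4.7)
The plan is to recognize this biconditional as the content of Stone's theorem on one-parameter unitary groups, together with the spectral theorem for self-adjoint operators. Since probability conservation forces $\|\psi(t)\| = \|\psi_0\|$ and the Cauchy problem is presumed reversible, the solution map $U(t) \colon \psi_0 \mapsto \psi(t)$ must be a strongly continuous one-parameter group of unitary operators, and Stone's theorem sets up a bijection between such groups and self-adjoint generators. So each direction of the theorem reduces to identifying that generator with $-iH/\hbar$.

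For the forward direction ($H$ self-adjoint $\Rightarrow$ dynamics exists), I would first use the spectral theorem to define $U(t) := e^{-itH/\hbar}$ through the Borel functional calculus. Unitarity and the group property $U(t+s) = U(t)U(s)$ follow because $t \mapsto e^{-it\lambda/\hbar}$ has modulus one and obeys the exponential law pointwise on the spectrum; strong continuity at $t = 0$ comes from dominated convergence against the spectral measure. For $\psi_0 \in \mathrm{Dom}(H)$, differentiating under the spectral integral yields $i\hbar \, \partial_t U(t)\psi_0 = H\, U(t)\psi_0$, so $\psi(t) := U(t)\psi_0$ solves the Cauchy problem, and unitarity gives probability conservation. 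Uniqueness follows from the standard energy identity: for any two solutions, self-adjointness of $H$ forces $\frac{d}{dt}\|\psi_1(t) - \psi_2(t)\|^2 = 0$.

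For the converse, I would start from the existence of a unitary, strongly continuous evolution $U(t)$ and invoke Stone's theorem to obtain a unique self-adjoint generator $A$ with $U(t) = e^{itA}$, whose natural domain consists of vectors at which $t \mapsto U(t)\psi_0$ is differentiable at $0$. Comparing the generator with the Schrödinger equation identifies $A = -H/\hbar$ on a dense domain, and self-adjointness of $A$ transfers to self-adjointness of $H$.

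The hard part, and the reason this theorem is nontrivial, is not the functional-calculus bookkeeping but rather the fact that $H = -\frac{\hbar^{2}}{2m}\Delta + V$ is unbounded, so \emph{self-adjoint} is a genuinely delicate statement about the domain. Symmetry of $H$ on $C_c^\infty(\mathbb{R}^n)$ is not sufficient: one needs a common dense domain on which $-\Delta$ and multiplication by $V$ combine into an operator equal to its adjoint, which typically requires a hypothesis such as $V$ being $-\Delta$-bounded with relative bound less than one (Kato--Rellich). The force of the theorem is precisely that this domain-theoretic condition is not merely sufficient but also necessary for the existence of probability-conserving dynamics.
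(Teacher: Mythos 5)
The paper states this theorem without proof --- it is cited as ``a fundamental result in the Mathematical theory of Quantum Mechanics'' and the exposition immediately moves on to observables. There is therefore no internal argument to compare against; your sketch supplies one where the source provides none. The route you take --- Stone's theorem on one-parameter unitary groups plus the Borel functional calculus --- is indeed the standard way this result is established (e.g.\ in Gustafson--Sigal, cited in the paper as [GS]), so you have identified the correct machinery.

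Two points deserve a bit more care if you want this sketch to close. First, in the converse direction, the inference from ``the Cauchy problem has a unique probability-conserving solution'' to ``the solution map $U(t)$ is a strongly continuous one-parameter \emph{unitary} group'' packs in several steps that you compress into ``presumed reversible'': norm conservation only gives that $U(t)$ is an isometry, and you need solvability for all $t\in\mathbb{R}$ (not just $t\ge 0$) together with the uniqueness clause to get the group law $U(t)U(s)=U(t+s)$ and hence invertibility, hence unitarity; strong continuity must likewise be read into what it means for $\psi(\cdot)$ to ``solve'' the equation. These are all reasonable readings of the informal statement, but they are exactly where the hypothesis is being used, so they should be made explicit rather than presumed. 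Second, the final sentence of your converse --- ``self-adjointness of $A$ transfers to self-adjointness of $H$'' --- is too quick as written: Stone's theorem hands you a self-adjoint generator $A$, and comparison with the equation shows $-\hbar A$ \emph{extends} whatever operator $H$ you started with on its initial (say $C_c^\infty$) domain. That makes $H$ essentially self-adjoint with closure $-\hbar A$, which is the honest conclusion; to literally say ``$H$ is self-adjoint'' one has to stipulate that $H$ means the operator equipped with the maximal domain on which the formal expression defines an $L^2$ function, which the paper's informal statement does not pin down. Your closing paragraph already gestures at exactly this domain sensitivity (Kato--Rellich etc.), so the fix is simply to phrase the converse as establishing essential self-adjointness on the given core rather than self-adjointness outright.

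Subject to those clarifications, the proof strategy is sound and is the one any rigorous treatment would use.
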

Trying to understand when dynamics exists is an interesting problem in itself, but from the Physicist's point of view, one is also very interested in measuring quantities about the physical system. For example, determining the position of the particle at a particular time is one important quantity we made mention of before. Two other quantities of specific interest are the particle's energy and the particle's momentum. These types of quantities are called the observables of the physical system. 
\begin{definition} An observable is a self-adjoint operator on the state space $L^{2}(\mathbb{R}^{n})$. 
\end{definition}
Regarding the energy of the system, we have actually already made mention of this observable. In fact it is nothing else but the right hand side of Schr\"{o}dinger's Equation. This operator is called the Hamiltonian of our system. It is usually denoted by $H$. 
\\
\\
Fundamental to the analysis of observables is the commutator bracket. The commutator for two bounded operators on a Hilbert Space is defined to be:
$$[A,B] = AB -BA$$
With some technical assumptions one can extend this notion to operators that are unbounded as well. What one needs is a notion of Spectral Measure (Actually these guys will creep up again when we discuss the Physical notion of Supersymmetry in the context of Quantum Mechanics). 

In one particular representation (called the Heisenberg Representation) of Quantum Mechanics the state of the particle is fixed for all times. Instead what one considers is the time evolution of the observable. In this formulation if A is the observable of the system, A satisfies the following differential equation: 
$$\frac{d}{dt}A(t) = \frac{i}{\hbar}[H,A(t)]$$  

The Schr\"{o}dinger picture (Wave Mechanics) and the Heisenberg Representation are equivalent ways of understanding Quantum Mechanics. In fact one can show that they are equivalent upto a unitary transformation. There is actually a third equivalent way to view Quantum Mechanics and it is due to Richard Feynman. The Feynman Path Integral is an object that has had a profound influence on results in Mathematics. It has served as a heuristic for Physicists and as a means to uncover deep results in Topology and Geometry. Instead of treading in these waters, we offer a more modest perspective. The objective of what is to follow will be to give an interpretation of the Feynman Path Integral as a convenient representation for the integral kernel of the Schr\"{o}dinger evolution operator, $e^{\frac{-itH}{\hbar}}$. This operator when applied to the initial data of the Cauchy problem, $\psi_{0}$ is the unique solution to the Schr\"{o}dinger equation. It can be conveniently written as
$$\psi(x,t) = e^{\frac{-itH}{\hbar}} \psi_{0}$$
Let $K_{n} = e^{\frac{it\hbar}{2mn}\Delta}e^{-\frac{-itV}{\hbar n}}$. We have by the Trotter Product Formula 
$$e^{\frac{-itH}{\hbar}} = \lim_{n \to \infty} (e^{\frac{it\hbar}{2mn}\Delta}e^{-\frac{-itV}{\hbar n}})^{n}$$      
This allows us to define an integral kernel for the operator $K_{n}$. Let this kernel be denoted by 
$$K_{n}(y,x) = e^{\frac{it\hbar}{2mn}\Delta}e^{-\frac{-itV(y)}{\hbar n}}$$ 
Thus the integral Kernel for the Schr\"{o}dinger evolution operator is given by
$$U_{t}(y,x) = \lim_{n \to \infty} \int \; _{\spdddot} \; \int K_{n}(y,x_{n-1}) \; _{\spdddot} K_{n}(x_{2},x_{1})K_{n}(x_{1},x) \; dx_{n-1} \; _{\spdddot} \; dx_{1}$$
Applying the Fourier Transform to the free Schrodinger Evolution operator: $e^{\frac{it\hbar}{2mn}\Delta}$ allows us via the inverse Fourier Transform to write down an explicit expression for the Integral Kernel:
$$U_{t}(y,x) = \lim_{n \to \infty} \int \; _{\spdddot} \; \int e^{\frac{iS_{n}}{\hbar}}(\frac{2\pi i \hbar t}{mn})^{-nd/2} \; dx_{n-1} \; _{\spdddot} \; dx_{1}$$
Where
$$S_{n} = \sum_{k = 0}^{n-1} \frac{mn |x_{k+1} - x_{k}|^{2}}{2t} - \frac{V(x_{k+1})t}{n}$$
and $x_{0} = x$ and $x_{n} = y$.
\\

We now define the piecewise linear function $\varphi_{n}$ such that: $\varphi_{n}(0) = x$, $\varphi_{n}(t/n) = x_{1}$, $_{\spdddot}$ , $\varphi_{n}(t) = y$. We then have
$$S_{n} = \sum_{k = 0}^{n-1} [\frac{m |\varphi_{n}((k+1)t/n) - \varphi_{n}(kt/n)|^{2}}{2(t/n)^{2}} - \frac{V(\varphi_{n}((k+1)t/n)}{t/n}] (t/n)$$
It is interesting to note that $S_{n}$ is the Riemann sum of the classical action for the paths $\varphi_{n}$
$$S(\varphi, t) = \int_{0}^{t} \frac{m}{2}|\dot{\varphi(s)}|^{2} - V(\varphi(s)) \; ds$$
Hence we derive the following representation formula for the integral kernel of the Schr\"{o}dinger Evolution Operator

$$U_{t}(y,x) = \lim_{n \to \infty} \int_{P_{x,y,t}^{n}} \; e^{\frac{iS_{n}}{\hbar}}\; D\varphi_{n}$$
Where 
$$P_{x,y,t}^{n} = \{\varphi_{n} \; | \varphi_{n}(0) = x, \; \varphi_{n}(t) = y, \; \varphi_{n} \; \textnormal{is linear on the intervals} \; (kt/n, (k+1)t/n) \; \forall k = 0,1, _{\spdddot}, n-1 \}$$
$$D\varphi_{n} = (\frac{2\pi i \hbar t}{mn})^{-nd/2} \; d\varphi_{n}(t/n) \; _{\spdddot} \; d\varphi_{n}((n-1)t/n)$$
We heuristically consider the limit as $n \to \infty$. In this situation we say that $\varphi_{n}$ approaches a general path $\varphi$ from $x$ to $y$, and $S_{n} \to S(\varphi)$. We formally write
$$U_{t}(y,x) = \int_{P_{x,y,t}} \; e^{\frac{iS(\varphi,t)}{\hbar}}\; D\varphi$$
Where
$$P_{x,y,t} = \{\varphi : [0,t] \to \mathbb{R}^{n} \; | \varphi(0) = x, \; \varphi(t) = y, \; \int_{0}^{t}|\dot{\varphi(s)}|^{2} < \infty \}$$
It may be interesting to note at this point that Witten using the Feynman Path Integral and a WKB approximation (to account for the tunneling phenomena of Quantum Particles) is able to construct a chain complex which helps him to refine the Morse Inequalities. In fact one can show that the homology of this chain complex actually coincides with the singular homology of M. The idea is that the $k^{th}$ chain group is defined to to be the free abelian group generated by critical points of the Morse function f with index k. The boundary operator $\partial_{*}$ is defined to be 
$$\partial_{k+1}(q) = \sum_{p \in M_{k}(f)} \; n(q,p)p$$
where $q \in M_{k+1}(f)$  and the sum is taken over all critical points p of index k. The number $n(q,p) \in \mathbb{Z}$ is the algebraic sum of the signed gradient flow lines from q to p. These flow lines are supposed to represent the path a Quantum Mechanical particle would take traveling between the two points. He calls these paths Instantons which can be mathematically represented by 1-d integral manifolds of $\nabla f$ which connect the two critical points. This is the exact moment in the analysis where Witten uses the Feynman Path Integral. It may also be useful to note that a lot of this work was subsequently generalized into Morse Homology.  
\\
\\
Let us now build upon our Quantum Theory by considering a more recent theoretical hypothesis about our Physical world. In particular we are interested in understanding the basic theory of Supersymmetry.  Without entering into elaborate Mathematical structures such as Graded Vector Spaces, Lie Super Algebras, and Super Manifolds (though calling the Mathematics here exciting is a bit of an understatement), we hope to convince the reader of the beauty and usefulness of incorporating this structure into our mathematical universe.  
\\
\\
In the more elaborate theory of Supersymmetry, one defines $\textit{Hermitian Supercharges}$ $Q_{\mu a\alpha}$ in an "N-Extended Super Poincar\'{e} Lie Algebra" where $\mu$ is a "vector index", $a$ is a "spin index", and $\alpha$ is an "internal index". Furthermore we have the fundamental commutation relations for each of our Supercharges
\\
$$\{Q_{\mu a \alpha},Q_{\nu b \beta}\} = 2g_{\mu \nu}\Sigma_{ab}\delta_{\alpha \beta}P_{\nu}$$
\\
Here $\Sigma$ represents a bi-linear form in the spinor indices and $P_{\nu}$ is the four momentum. For our considerations we will not need the full strength of the Supersymmetric theory, instead we modestly restrict to the $0$-dimensional case. This means we consider 0 space dimensions and 1 time dimension. In this case the Supercharges satisfy more simply:
$$\{Q_{\alpha}, Q_{\beta}\} = 2\delta_{\alpha \beta}P_{0}$$
Where $P_{0}$ is just the first component of the 4-vector which represents the energy. This will be denoted by H.  Some immediate consequences are
\\
$$[H,Q_{\alpha}] = [Q_{\alpha}^{2}, Q_{\alpha}] = 0$$
$$H = Q_{\alpha}^{2}$$
\\
Hence we see that the Hamiltonian $H$ is a Positive Hermitian Operator. Furthermore we have $ker(H) = \cap_{i} ker(Q_{i})$. This last observation actually helps to formulate one of the most important problems in Supersymmetry:  
If the equation $H\psi = 0$ has a non trivial solution, a straightforward calculation shows that the state is invariant under the Supercharges.  Similarly if the equation has no non-trivial solutions then the state is not invariant under the Supercharges. If we are in the latter situation we say that our Supersymmetry is "spontaneously broken". Due to the current experimental evidence, it is highly likely that if our Physical world obeys Supersymmetry then it must be broken. This is the Physical question that motivated Witten. Having given a brief outline of Supersymmetry we now expand some effort to construct a Supersymmetry model. In particular we consider $\mathcal{N} =2$ SUSY model in $(1+0)$ Dimensions. 
\\
\\
Let $\mathcal{H}$ be a given Hilbert Space, let $Q$ be a given self-adjoint operator, and P a given bounded self-adjoint operator on $\mathcal{H}$ such that 
\\
$$Q_{1} = Q$$
$$Q_{2} = iQP$$
$$H = Q^{2}$$
$$\{Q,P\} = 0$$
$$[P, H] = 0$$
\\
We say that such a Physical system $(\mathcal{H},Q,P)$ has Supersymmetry. Since P is a self-adjoint operator and squares to 1 the only eigenvalues are $\pm1$. Hence this provides us with a natural decomposition of our Hilbert Space $\mathcal{H}$: 
\\
$$\mathcal{H}_{f} = \{\varphi \in \mathcal{H} \; | \; P\varphi = -\varphi\}$$  
$$\mathcal{H}_{b} = \{\varphi \in \mathcal{H} \; | \; P\varphi = \varphi\}$$  
$$\mathcal{H} = \mathcal{H}_{f}  \oplus \mathcal{H}_{b}$$
\\
The states in $\mathcal{H}_{f}$ are called Fermions and the states in $\mathcal{H}_{b}$ are called Bosons. This is exactly to remind us of the Fermionic and Bosonic states particles can take. The point we will try to make explicit is that SUSY is a symmetry that relates these two different particle states. 
\\
\\
Under the above decomposition of our Hilbert Space we have the following simplifications
\\
\[P =  \left( \begin{array}{ccc}
\mathbb{I}_{b} & 0 \\
0 & -\mathbb{I}_{f}  \\
\end{array} \right)\] 
\\
In particular we get the following decomposition of the algebra of operators acting on the Hilbert Space $\mathcal{H} = \mathcal{H}_{f}  \oplus \mathcal{H}_{b}$. Let $T = \left( \begin{smallmatrix}
A & B \\
C & D
\end{smallmatrix}\right)$ be an arbitrary operator. The point is that:
\\

$T$ is Bosonic or Even $\iff [P,T] = 0 \iff$  
\[T =  \left( \begin{array}{ccc}
A & 0 \\
0 & D  \\
\end{array} \right)\]  

$T$ is Fermionic or Odd $\iff \{P,T\} = 0 \iff$  
\[T =  \left( \begin{array}{ccc}
0 & B \\
C & 0  \\
\end{array} \right)\]  
\\
This identification gives us some results on our operator $Q$. Because it is Hermitian and anti-commutes with $P$ we get the following decomposition:
\\
\[Q =  \left( \begin{array}{ccc}
0 & A^{*} \\
A & 0  \\
\end{array} \right)\]  

Let $\psi = \psi_{f} \oplus \psi_{b} \in \mathcal{H}$. Then
\[
Q\psi =  \left( \begin{array}{ccc}
0 & A^{*} \\
A & 0  \\
\end{array} \right)
\left( \begin{array}{ccc}
\psi_{b} \\
\psi_{f}\\
\end{array} \right)
=
\left( \begin{array}{ccc}
A^{*}\psi_{f} \\
A\psi_{b}  \\
\end{array} \right)
\]  
\\
Since $Q: \mathcal{H} \to \mathcal{H}$ we have shown that Q interchanges Fermionic and Bosonic states. 
\\
$$Q: \mathcal{H}_{f} \to \mathcal{H}_{b}$$ 
$$Q: \mathcal{H}_{b} \to \mathcal{H}_{f}$$ 
\\
We also have a nice characterization of our Hamiltonian
\\
\[H =  \left( \begin{array}{ccc}
A^{*}A & 0 \\
0 & AA^{*}  \\
\end{array} \right)\]  
\\
We now state a fundamental result in the Mathematical theory of Supersymmetry. It says that the non-zero eigenstates have the same number of Bosonic and Fermionic states
\begin{theorem} Suppose $(H,P,Q)$ has Supersymmetry. Then for any bounded open set $\Omega \subseteq (0,\infty)$ we have
$$dim(E_{\Omega}(H)|_{\mathcal{H}_{b}}) = dim(E_{\Omega}(H)|_{\mathcal{H}_{f}})$$
Where $E_{\Omega}(H)$ is the spectral projection of H onto $\Omega$. 
\end{theorem}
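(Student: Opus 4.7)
The plan is to use the off-diagonal block $A$ of $Q$ to build an explicit Hilbert-space isomorphism between $E_\Omega(H)\mathcal{H}_b$ and $E_\Omega(H)\mathcal{H}_f$. The guiding observation is that $Q$ intertwines $\mathcal{H}_b$ and $\mathcal{H}_f$ while commuting with $H = Q^2$, and on the range of $E_\Omega(H)$ (with $0 \notin \Omega$) the operator $H$ is injective. So $Q$, restricted to this spectral subspace, is the natural candidate for the intertwiner.

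First I would verify that $A$ descends to a bounded operator $A_\Omega \colon E_\Omega(H)\mathcal{H}_b \to E_\Omega(H)\mathcal{H}_f$. Since $Q$ commutes with every Borel function of $H$, we have $[Q, E_\Omega(H)] = 0$, and because $\{Q, P\} = 0$ the operator $Q$ interchanges the $\pm 1$ eigenspaces of $P$. Boundedness follows from $\|A_\Omega \psi\|^2 = \langle \psi, A^*A\psi \rangle = \langle \psi, H \psi \rangle \leq \sup(\overline{\Omega})\, \|\psi\|^2$, which is finite because $\Omega$ is bounded. The analogous estimate bounds $A_\Omega^*$.

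The next step is to show that both $A_\Omega$ and $A_\Omega^*$ are injective. If $A_\Omega \psi = 0$ then $H\psi = A^*A\psi = 0$; by the spectral theorem any vector in $\ker H$ is supported on $\{0\}$ for the spectral measure, so $E_\Omega(H)\psi = 0$ whenever $0 \notin \Omega$, forcing $\psi = 0$. The identical argument applied to $AA^* = H|_{\mathcal{H}_f}$ gives injectivity of $A_\Omega^*$, which via $\operatorname{ran}(A_\Omega)^{\perp} = \ker(A_\Omega^*)$ is equivalent to $\operatorname{ran}(A_\Omega)$ being dense in $E_\Omega(H)\mathcal{H}_f$.

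Finally I would invoke the polar decomposition $A_\Omega = U |A_\Omega|$ with $|A_\Omega| = (H|_{\mathcal{H}_b})^{1/2}$. Injectivity of $|A_\Omega|$ makes $U$ an isometry defined on all of $E_\Omega(H)\mathcal{H}_b$, and density of $\operatorname{ran}(A_\Omega)$ in $E_\Omega(H)\mathcal{H}_f$ makes $U$ surjective; hence $U$ is a unitary isomorphism and the two dimensions agree. The main obstacle is that when $\inf \Omega = 0$ the operator $H$ is injective but not bounded below on the spectral subspace, so $A_\Omega^{-1}$ need not exist as a bounded operator. The polar decomposition circumvents exactly this obstruction, since it only uses injectivity of $H$ on $\operatorname{ran}(E_\Omega(H))$, not invertibility with bounded inverse.
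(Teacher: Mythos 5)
The paper does not actually prove this theorem: it explicitly says ``we content ourselves with the following motivating remark'' and ``This is certainly not a proof,'' offering only the heuristic computation $H(Q\psi) = QH(\psi) = E(Q\psi)$, which shows $Q$ carries eigenstates of $H$ with energy $E$ to eigenstates of the same energy, and the intuition that nonzero eigenstates therefore come in bosonic/fermionic pairs. Your proposal turns that heuristic into an actual proof, and the argument is correct: you correctly observe that $Q$ commutes with $E_\Omega(H)$ so $A$ restricts to a bounded map $A_\Omega$ between $E_\Omega(H)\mathcal{H}_b$ and $E_\Omega(H)\mathcal{H}_f$ (boundedness from $\langle\psi, A^*A\psi\rangle = \langle\psi, H\psi\rangle \le \sup\overline{\Omega}\,\|\psi\|^2$); injectivity of $A_\Omega$ and $A_\Omega^*$ follows because $0 \notin \Omega$; and the polar decomposition $A_\Omega = U|A_\Omega|$ yields an isometry $U$ whose range is both closed (isometries have closed range) and dense, hence all of $E_\Omega(H)\mathcal{H}_f$, giving the desired unitary equivalence. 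This is exactly the technical content the paper waves away — the ``precise notion of Spectral Projections'' — and you have supplied it. You also correctly flag the one genuine obstruction, that when $\inf\Omega = 0$ the restriction of $H$ need not be bounded below so $A_\Omega^{-1}$ may be unbounded, and correctly note that the polar decomposition sidesteps it by needing only injectivity, not bounded invertibility. One minor imprecision: $|A_\Omega|$ is $(A_\Omega^*A_\Omega)^{1/2}$, which is the restriction of $(H|_{\mathcal{H}_b})^{1/2}$ to the spectral subspace, not $(H|_{\mathcal{H}_b})^{1/2}$ globally; this does not affect the argument.
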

Since the rigorous justification of this theorem relies on giving a precise notion of Spectral Projections we content ourselves with the following motivating remark: Let $\psi$ be an eigenstate with energy level $E > 0$. Since $[H,Q] = 0$ we have
\\
$$ H(Q\psi) = QH(\psi) = Q(E\psi) = E(Q\psi)$$
\\
This small computation tells us that for each $\psi \in \mathcal{H}_{f}$ we actually have what Physicists call a "Superpartner" $Q\psi \in \mathcal{H}_{b}$ with the same energy as $\psi$. So what one expects is that for all eigenstates with $E > 0$ they come in pairs! This is certainly not a proof, but it does help build our intuition. 
\\
\\
We now come to the primary example we will consider in this paper of a Supersymmetric Quantum Mechanical system. Actually, most of what we want to show we have already done in the previous section. We recall the system for convenience:
\\
$$\mathcal{H} = \bigoplus_{p = 0}^{n} \varOmega^{p}$$
$$ H = \Delta$$
$$Q = d + d^{*}$$
$$P_{\varOmega^{p}} = (-1)^{p}$$
\\
We already saw before that these operators satisfy the conditions for Supersymmetry. 
$$Q^{2} = \Delta$$
$$P^{2} = 1$$
$$\{Q,P\} = 0$$
\\
The operator $P$ similarly decomposes our Hilbert Space, in this case the space is "graded" into the space of even and odd forms
\\
$$\mathcal{H}_{b} = \bigoplus_{p = even} \varOmega^{p}$$
$$\mathcal{H}_{f} = \bigoplus_{p = odd} \varOmega^{p}$$ 
\\
Our operator Q can be analogously decomposed into 
\\
\[Q =  \left( \begin{array}{ccc}
0 & d^{*} \\
d & 0  \\
\end{array} \right)\]  
\\
Finally in this setting our fundamental result about Supersymmetry reads as follows: For $E \neq 0$
\\
$$\sum_{p \; odd} dim(ker[(\Delta - E)|_{\Omega^{p}}]) = \sum_{p \; even} dim(ker[(\Delta - E)|_{\Omega^{p}}]) $$
\section{Semi-Classical Approximation of Schr\"{o}dinger Operators}
As we saw in the last section, there is a rich mathematical theory surrounding Quantum Mechanical particles. In particular to study the dynamics of a particle one looks at the theory of self-adjoint operators in a Hilbert Space. In this section we consider a finer analysis of this problem. More specifically we will be interested in the semi-classical analysis of the eigenvalues of an appropriately chosen Schr\"{o}dinger Operator. We start by analyzing the operator in $\mathbb{R}^{n}$ then we comment on how to carry out this analysis on a compact manifold. We follow the arguments in \cite{CFKS} and \cite{S}.
\\
\\
We will be interested in discussing the semi-classical eigenvalue limit of self-adjoint operators of the form
$$H(\lambda) = -\Delta + \lambda^{2}h + \lambda g$$
Here we let $h,g \in C^{\infty}(\mathbb{R}^{n})$, g bounded, $h \geq 0$, $h > c > 0$ outside a compact set. Also we assume that h vanishes at only finitely many points $(x^{a})_{a=1}^{k}$ and that the Hessian of $h$ when evaluated at these points is strictly positive definite for every $k$. The objective is to estimate the eigenvalues of $H(\lambda)$ for large $\lambda$. The physical intuition here is that for large $\lambda$ the first term in the potential energy dominates and hence the operator begins to look more and more like finitely many harmonic oscillators centered at the zeros of h and separated by large barriers. Hence for large $\lambda$ and each critical point we expect $H(\lambda)$ to look like a direct sum of operators of the form
$$H^{a}(\lambda) = -\Delta +  \frac{\lambda^{2}}{2} \sum_{ij} \frac{\partial^{2}h}{\partial x_{i} \partial x_{j}} (x - x^{a})_{i}(x - x^{a})_{j} + \lambda g(x^{a})$$
We also define
$$K^{a}(\lambda) = -\Delta +  \frac{1}{2} \sum_{ij} \frac{\partial^{2}h}{\partial x_{i} \partial x_{j}} (x - x^{a})_{i}(x - x^{a})_{j} + g(x^{a})$$
Notice that up to the constant $g(x^{a})$, $K^{a}$ is the Harmonic Oscillator! By collecting the eigenvalues of $K^{a}$ $ \; \forall \; \; a = \{0,1, _{\spdddot}, k\} $ we obtain the spectrum of $$\bigoplus_{a} K^{a}$$
acting in $$\bigoplus_{a} L^{2}(\mathbb{R}^{n})$$ We also have the property:
$$\sigma(\bigoplus_{a} K^{a}) = \bigcup_{a} \sigma(K^{a})$$

Let us now state the main result concerning semi-classical limits
\begin{theorem} Let $H(\lambda)$ and $\sigma(\bigoplus_{a} K^{a})$ be as above. Let $E_{n}(\lambda)$  denote the $n^{th}$ eigenvalue of $H(\lambda)$ counting multiplicity. Denote $e_{n}$ as the $n^{th}$ eigenvalue for  $\sigma(\bigoplus_{a} K^{a})$ counting multiplicity. Then for a fixed n and $\lambda$ large, $H(\lambda)$ has at least n eigenvalues, and 
$$\lim_{\lambda \to \infty} \frac{E_{n}(\lambda)}{\lambda} = e_{n}$$
\end{theorem}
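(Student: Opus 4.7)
The plan is to establish $\lim_{\lambda \to \infty} E_n(\lambda)/\lambda = e_n$ by proving the two opposite inequalities via the Courant--Fischer min-max principle applied to $H(\lambda)/\lambda$. Existence of at least $n$ eigenvalues for $\lambda$ large is a by-product: since $h > c > 0$ outside a compact set and $g$ is bounded, the essential spectrum of $H(\lambda)$ begins no lower than $\lambda^2 c/2$ for $\lambda$ large, while the $n$-th approximate eigenvalue sits at height $O(\lambda)$, so the values produced below are genuine discrete eigenvalues.

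\textbf{Upper bound.} For each critical point $x^a$ and each eigenfunction $\phi^a_m$ of $K^a$ on $L^2(\mathbb{R}^n)$ with eigenvalue $e^a_m$, introduce the rescaled, localized trial function
$$\psi^a_{m,\lambda}(x) = \lambda^{n/4}\,\chi_a(x)\,\phi^a_m\!\bigl(\lambda^{1/2}(x-x^a)\bigr),$$
where $\chi_a$ is a smooth cutoff equal to $1$ near $x^a$ and supported in a ball where no other critical point lives. A change of variable $y=\lambda^{1/2}(x-x^a)$ together with the Taylor expansion $h(x)=\tfrac12\sum h_{ij}(x^a)(x-x^a)_i(x-x^a)_j+O(|x-x^a|^3)$ converts $\lambda^{-1}H(\lambda)$ acting on $\psi^a_{m,\lambda}$ into $K^a$ acting on $\phi^a_m$, plus corrections controlled by the cubic remainder of $h$, the variation of $g$, and the tail of $\phi^a_m$; the Gaussian decay of the harmonic-oscillator eigenfunctions makes these $o(1)$. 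The $\psi^a_{m,\lambda}$ for distinct $a$ have disjoint supports, so the span of the trial functions corresponding to the $n$ smallest values of $\{e^a_m\}_{a,m}$ is $n$-dimensional, and min-max yields $E_n(\lambda)/\lambda \le e_n + o(1)$.

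\textbf{Lower bound via IMS localization.} Choose a smooth partition of unity $\{J_a\}_{a=0}^{k}$ with $\sum_a J_a^2 = 1$, where $J_a$ ($a\ge1$) is supported in $B(x^a,\delta_\lambda)$ and $J_0$ is supported away from every $x^a$. The IMS identity
$$H(\lambda) = \sum_{a=0}^{k} J_a\, H(\lambda)\, J_a - \sum_{a=0}^{k}|\nabla J_a|^2$$
reduces matters to estimating each piece. On $\mathrm{supp}\,J_0$ the potential satisfies $\lambda^2 h + \lambda g \gtrsim \lambda^2$, so that contribution is irrelevant for eigenvalues of size $O(\lambda)$. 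On $\mathrm{supp}\,J_a$ a quadratic Taylor expansion of $h$ shows that $J_aH(\lambda)J_a$ equals $\lambda$ times a Dirichlet-localized $K^a$ (after the rescaling $y=\lambda^{1/2}(x-x^a)$), plus a cubic error of size $O(\lambda^2\delta_\lambda^3)$ and an $O(\delta_\lambda)$ error from $g$. The IMS correction contributes $O(\delta_\lambda^{-2})$. Choosing $\delta_\lambda = \lambda^{-1/2+\varepsilon}$ with $0<\varepsilon<1/6$ places both error sources in $o(\lambda)$, so applying min-max on the direct sum $\bigoplus_a L^2(B(0,\lambda^\varepsilon))$ and comparing with $\bigoplus_a K^a$ gives $E_n(\lambda)/\lambda \ge e_n - o(1)$.

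\textbf{Main obstacle.} The principal difficulty lies in the lower bound: one must pick the localization radius $\delta_\lambda$ inside the window $\lambda^{-1/2}\ll\delta_\lambda\ll\lambda^{-1/3}$ so that the cubic Taylor remainder in $h$ and the kinetic cost $|\nabla J_a|^2$ of the IMS cutoffs are \emph{simultaneously} negligible compared to $\lambda$. One then has to justify that the spectrum of the Dirichlet-localized $K^a$ on the growing ball $B(0,\lambda^\varepsilon)$ converges to the spectrum of $K^a$ on all of $\mathbb{R}^n$, which rests on the rapid decay of Hermite-type eigenfunctions of the harmonic oscillator. Finally, some bookkeeping with min-max on the direct sum structure is needed to match the combinatorics of choosing the $n$ lowest eigenvalues across the different wells $K^a$; this is routine once the previous steps are in place.
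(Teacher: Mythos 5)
Your proof follows essentially the same architecture as the paper's: an upper bound via rescaled, cut-off harmonic-oscillator trial functions plugged into the min-max principle, a lower bound via IMS localization with a carefully tuned cutoff radius, and the observation that the essential spectrum grows like $\lambda^2$ (so the candidate eigenvalues are genuinely discrete). Your parameterization $\delta_\lambda = \lambda^{-1/2+\varepsilon}$ with $0<\varepsilon<1/6$ is a more transparent version of the paper's fixed choice $\lambda^{-2/5}$ (i.e.\ $\varepsilon = 1/10$), and you correctly isolate the competing constraints from the cubic Taylor error ($\varepsilon<1/6$) and the IMS gradient penalty ($\varepsilon>0$).

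The one place you genuinely depart is the lower bound. The paper never introduces Dirichlet boundary conditions: after IMS it compares $J_a H J_a$ to $J_a H^a J_a$ using the same $O(\lambda^{4/5})$ Taylor estimate, and then bounds $H^a$ from below by $\lambda e + R^a$ directly on $L^2(\mathbb{R}^n)$, where $R^a$ is the (finite-rank) spectral projection of $H^a$ onto eigenvalues below $\lambda e$. Summing and using $\operatorname{rank}\bigl(\sum_a J_a R^a J_a\bigr)\le\sum_a\operatorname{rank}(R^a)=m$ gives $H(\lambda)\ge\lambda e+R+o(\lambda)$ with $R$ of rank $m$, from which $E_{m+1}\ge\lambda e+o(\lambda)$ follows by choosing a unit vector in the span of the first $m+1$ eigenfunctions orthogonal to $\operatorname{ran}R$. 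Your alternative--passing to Dirichlet realizations of $K^a$ on growing balls $B(0,\lambda^\varepsilon)$ and invoking convergence of their spectra to that of $K^a$ on $\mathbb{R}^n$--is also standard and would work, but it introduces an extra limiting argument (domain monotonicity of Dirichlet eigenvalues plus Gaussian decay of Hermite functions) that the rank-projection argument sidesteps entirely; moreover, $J_a H J_a$ is not literally a Dirichlet operator, so you would still need to say a word about why the comparison with the direct sum of Dirichlet problems is valid at the level of min-max. The paper's route avoids both of these technicalities and is worth internalizing as the cleaner version of the same idea.
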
  
Remark: What this theorem shows, is that to leading order, the $n^{th}$ Eigenvalue $E_{n}(\lambda)$, is completely determined by the $n^{th}$ Eigenvalue of the approximating Harmonic Oscillators. We note for completeness that there also exist results that give the complete asymptotic expansion for $E_{n}(\lambda)$ in powers of $\lambda^{-m}$ for $m \geq -1$.   
\\
\\
Before giving a proof of this statement we will have to state two results from the analysis of Schr\"{o}dinger Operators. We start with a definition. 
\begin{definition} A family of fuctions $\{J_{a}\}_{a}$ indexed by a set A is a called a Partition of Unity if 
$$ (i) \; 0\leq J_{a}(x) \leq 1 \; \; \forall x \in \mathbb{R}^{n} $$
$$ (ii) \; \sum J_{a}^{2}(x) = 1 $$
$$ (iii) \; \{J_{a}\} \; \textnormal{is locally finite} \; $$
$$ (iv) \; J_{a} \in C^{\infty} $$
$$ (v) \; \sup_{x \in \mathbb{R}^{n}} \sum_{a} |\nabla J_{a}(x)|^{2} < \infty $$
\end{definition}

\begin{theorem} (IMS Localization Formula) Let $\{J_{a}\}_{a}$ be a partition of unity and let $H = H_{0} + V$ for an appropriately decaying potential V. Then 
$$H = \sum_{a} J_{a}HJ_{a} -  \sum_{a} |\nabla J_{a}|^{2}$$
\end{theorem}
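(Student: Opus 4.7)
The plan is to reduce the statement to a purely algebraic identity at the level of $H_0 = -\Delta$, using the fact that the potential $V$ is a multiplication operator and therefore commutes with each $J_a$. First I would observe that since $\sum_a J_a^2 = 1$ pointwise, we immediately get
$$\sum_a J_a V J_a = \sum_a J_a^2 V = V,$$
so the content of the formula lies entirely in treating the kinetic term. Accordingly, it suffices to prove
$$H_0 = \sum_a J_a H_0 J_a - \sum_a |\nabla J_a|^2.$$

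The key algebraic step is the symmetric commutator identity
$$J_a H_0 J_a = \tfrac{1}{2}\bigl(J_a^2 H_0 + H_0 J_a^2\bigr) - \tfrac{1}{2}\bigl[J_a,[J_a, H_0]\bigr],$$
which holds for any (sufficiently regular) multiplication operator $J_a$ and any operator $H_0$; one checks it simply by expanding the double commutator. Summing over $a$ and invoking $\sum_a J_a^2 = 1$, the first bracket collapses to $H_0$, and we are left with
$$\sum_a J_a H_0 J_a = H_0 - \tfrac{1}{2}\sum_a \bigl[J_a,[J_a, H_0]\bigr].$$
So the whole problem reduces to computing the double commutator $[f,[f,-\Delta]]$ for a smooth function $f$.

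To carry out that computation I would first note the elementary identity $[f,\partial_i]\psi = -(\partial_i f)\psi$, which gives $[f,-\Delta] = (\Delta f) + 2\nabla f \cdot \nabla$ as operators. Taking a further commutator with $f$, the multiplication piece $(\Delta f)$ commutes with $f$, and the transport piece yields
$$\bigl[f,\, 2\nabla f \cdot \nabla\bigr]\psi = 2\nabla f \cdot \bigl(f\nabla \psi - \nabla(f\psi)\bigr) = -2|\nabla f|^2 \psi.$$
Hence $[f,[f,-\Delta]] = -2|\nabla f|^2$. Substituting $f = J_a$ and plugging back into the previous display produces exactly $H_0 = \sum_a J_a H_0 J_a - \sum_a |\nabla J_a|^2$, and adding the potential gives the stated formula.

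The main obstacle is not the algebra, which is elementary, but making sure all the formal manipulations are legitimate on a suitable common dense domain: one must verify that the series $\sum_a J_a H J_a$ and $\sum_a |\nabla J_a|^2$ converge in the strong sense on, say, $C_c^\infty(\mathbb{R}^n)$, and that the interchange of the sum with differentiation is valid. The local finiteness assumption (iii) in the definition of a partition of unity ensures that at each point only finitely many $J_a$ contribute, making the sums pointwise finite; condition (v) gives the uniform bound on $\sum_a |\nabla J_a|^2$ which controls the error term as a bounded multiplication operator. The hypothesis that $V$ is appropriately decaying enters only to guarantee that $H$ is self-adjoint on a standard domain and that the identity extends from $C_c^\infty$ to that domain in the form sense.
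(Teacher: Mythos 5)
Your argument is correct and takes essentially the same route as the paper: isolate $\sum_a J_a H J_a$ via the algebraic identity involving the double commutator $[J_a,[J_a,H]]$, exploit $\sum_a J_a^2 = 1$, and evaluate the double commutator against $-\Delta$ directly. One detail worth flagging: you correctly compute $[f,[f,-\Delta]] = -2|\nabla f|^2$, carrying the factor of $2$ through so that the end result is $H = \sum_a J_a H J_a - \sum_a|\nabla J_a|^2$; the paper's proof instead writes $[J_a,[J_a,H]] = -(\nabla J_a)^2$, which, when combined with the other expression $[J_a,[J_a,H]] = J_a^2 H + HJ_a^2 - 2J_aHJ_a$ and summed, would yield the wrong coefficient $\tfrac{1}{2}\sum_a|\nabla J_a|^2$. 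So your version is actually the one that closes correctly, and your closing remarks on local finiteness and condition (v) supply the domain bookkeeping the paper leaves implicit.
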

Remark: The condition on V has to be chosen so to guarantee that the state $J_{a}\psi$ remains in the domain of the Hamiltonian given $\psi$ lies in the domain.
\begin{proof}
The proof is straightforward. Compute directly the commutation relation $[J_{a}, [J_{a},H]]$, which tells us
$$[J_{a}, [J_{a},H]] = - (\nabla J_{a})^{2}$$
$$[J_{a}, [J_{a},H]] = J_{a}^{2}H + HJ_{a}^{2} - 2J_{a}HJ_{a}$$
To finish, sum over all values of $a$ to get the desired conclusion.
\end{proof}

The next result we state without proof, but we comment on its importance to the theory of Schr\"{o}dinger Operators. It will be used in a crucial way in the proof of our main theorem. 

\begin{theorem} Let V be a bounded operator. Then 
$$\inf \sigma_{ess}(H) = \sup_{K \subseteq \mathbb{R}^{n} \; compact} \; \; \inf_{\varphi \in C_{0}^{\infty}(\mathbb{R}^{n} \setminus K) \; \|\varphi \| = 1} <\varphi, H\varphi>$$
Where $ \sigma_{ess}(H) $ is the essential/continuous spectrum of $H$.   
\end{theorem}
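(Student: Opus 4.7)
The plan is to prove this classical result of Persson by establishing the two inequalities between $\inf \sigma_{ess}(H)$ and $\Sigma := \sup_K \inf_\varphi \langle \varphi, H\varphi\rangle$ separately. Boundedness of $V$ guarantees that $H = -\Delta + V$ is self-adjoint on $H^{2}(\mathbb{R}^{n})$ and bounded below with $\lambda_{1} := \inf \sigma(H) \geq \inf V > -\infty$, a fact used in both halves of the argument.

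I would first establish $\Sigma \geq \inf\sigma_{ess}(H)$ by a finite-rank spectral projection argument. Fix $E < \inf\sigma_{ess}(H)$, pick an intermediate $E' \in (E, \inf\sigma_{ess}(H))$, and let $P := \chi_{(-\infty,E']}(H)$. By definition of the essential spectrum, $P$ has finite rank, so its range is spanned by finitely many orthonormal eigenfunctions $\psi_{1}, \ldots, \psi_{N} \in L^{2}(\mathbb{R}^{n})$. For $\varphi \in C_{0}^{\infty}(\mathbb{R}^{n} \setminus K)$ with $\|\varphi\|=1$, Cauchy--Schwarz gives $\|P\varphi\|^{2} \leq \sum_{i=1}^{N} \|\chi_{K^{c}}\psi_{i}\|^{2}$, which vanishes as $K$ exhausts $\mathbb{R}^{n}$ since each $\psi_{i} \in L^{2}$. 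Combining $H \geq E'$ on $\mathrm{Range}(I-P)$ with the global bound $H \geq \lambda_{1}$ yields
$$\langle\varphi, H\varphi\rangle \geq E'\bigl(1 - \|P\varphi\|^{2}\bigr) + \lambda_{1}\|P\varphi\|^{2},$$
which exceeds $E$ once $K$ is chosen large enough. Letting $E \nearrow \inf\sigma_{ess}(H)$ closes this direction.

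The reverse inequality $\Sigma \leq E_{0} := \inf\sigma_{ess}(H)$ couples a Weyl singular sequence with the IMS Localization Formula. Weyl's criterion furnishes $\psi_{n} \in C_{0}^{\infty}(\mathbb{R}^{n})$ (by density in $D(H)$) with $\|\psi_{n}\|=1$, $\psi_{n} \rightharpoonup 0$ in $L^{2}$, and $\langle\psi_{n}, H\psi_{n}\rangle \to E_{0}$; boundedness of $V$ forces $\|\nabla\psi_{n}\|_{L^{2}}$ uniformly bounded, so Rellich compactness upgrades weak convergence to strong $L^{2}$ convergence on compacts. Fix a compact $K$, choose a smooth partition $\{J_{0}, J_{1}\}$ with $J_{0} \equiv 1$ on $K$, $J_{0}$ compactly supported, and $J_{1} = \sqrt{1 - J_{0}^{2}}$. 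The IMS formula then reads
$$\langle\psi_{n}, H\psi_{n}\rangle = \langle J_{0}\psi_{n}, HJ_{0}\psi_{n}\rangle + \langle J_{1}\psi_{n}, HJ_{1}\psi_{n}\rangle - \bigl\langle\psi_{n}, \bigl(|\nabla J_{0}|^{2} + |\nabla J_{1}|^{2}\bigr)\psi_{n}\bigr\rangle.$$
The last term vanishes by Rellich (compactly supported coefficient), while $\langle J_{0}\psi_{n}, HJ_{0}\psi_{n}\rangle \geq \lambda_{1}\|J_{0}\psi_{n}\|^{2} \to 0$. Since $\|J_{1}\psi_{n}\| \to 1$ and $\psi_{n}, J_{1}$ are smooth with the appropriate support properties, the normalized functions $\tilde\varphi_{n} := J_{1}\psi_{n}/\|J_{1}\psi_{n}\|$ lie in $C_{0}^{\infty}(\mathbb{R}^{n} \setminus K)$ with $\langle\tilde\varphi_{n}, H\tilde\varphi_{n}\rangle \leq E_{0} + o(1)$, so $\inf_{\varphi}\langle\varphi, H\varphi\rangle \leq E_{0}$ for every $K$, hence $\Sigma \leq E_{0}$.

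The main obstacle is the ``interior'' term $\langle J_{0}\psi_{n}, HJ_{0}\psi_{n}\rangle$ in the IMS decomposition: it contains a kinetic contribution $\|\nabla(J_{0}\psi_{n})\|^{2}$ that is bounded but not obviously small, so one cannot hope to discard it as an identity. The resolution is that only a lower bound on this term is needed to convert the IMS identity into an upper bound on the ``exterior'' quantity, and semi-boundedness of $H$ together with $\|J_{0}\psi_{n}\|_{L^{2}} \to 0$ supplies exactly such a lower bound. This one-sided estimate is why the argument yields an inequality rather than an asymptotic equality at the level of functionals, but it is precisely what the supremum on the right-hand side of Persson's formula requires.
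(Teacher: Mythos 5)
The paper states this result (it is Persson's theorem) and explicitly declines to prove it, so there is no argument of the paper to compare yours against. Your proof is correct and is the standard one found in, e.g., Cycon--Froese--Kirsch--Simon: you obtain $\Sigma \geq \inf\sigma_{\mathrm{ess}}(H)$ by exploiting that the spectral projection below any $E' < \inf\sigma_{\mathrm{ess}}(H)$ is finite rank, so its effect on states supported outside a large compact is uniformly negligible; and you obtain $\Sigma \leq \inf\sigma_{\mathrm{ess}}(H)$ by IMS-localizing a Weyl singular sequence, using Rellich to kill the gradient-of-cutoff term and semi-boundedness of $H$ to discard (one-sidedly) the interior piece. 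Both halves are sound, and your closing remark correctly identifies why only a lower bound on the interior term is needed. The single point worth tightening in a written-out version is the construction of the partition: choose $J_{0} \equiv 1$ on a \emph{neighborhood} of $K$ (not merely on $K$) with a flat cutoff so that $J_{1} = \sqrt{1-J_{0}^{2}}$ is genuinely $C^{\infty}$ and vanishes on a neighborhood of $K$; then $J_{1}\psi_{n}$ honestly lies in $C_{0}^{\infty}(\mathbb{R}^{n}\setminus K)$, which is what the right-hand side of the formula requires.
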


Let us now sketch a proof of our main theorem. 
\begin{proof} 
The proof will be shown in two steps. In the first step we control the $\limsup_{\lambda \to \infty} \frac{E_{n}(\lambda)}{\lambda}$ from above by $e_{n}$. In the second step we control $\liminf_{\lambda \to \infty} \frac{E_{n}(\lambda)}{\lambda}$ from below by $e_{n}$.
\\
$\textbf{Step 1}$ Claim:  $\limsup_{\lambda \to \infty} \frac{E_{n}(\lambda)}{\lambda} \leq e_{n}$
\\
We define $J \in C_{0}^{\infty}(\mathbb{R}^{n})$ with $0 \leq J \leq 1$, 
$$ J(x) =  
\begin{cases}
1, & \text{if }|x| \leq 1 \\
0, & \text{if} \; |x| \geq 2
\end{cases}
$$
Let 
$$J_{a}(\lambda) = J(\lambda^{2/5}(x-x^{a})) \; \; \forall a = \{0,1, _{\spdddot},k\}$$
\\
$$ J_{0}(\lambda) = \sqrt{1 - \sum_{a=1}^{k}[J_{a}(\lambda)]^{2}}$$
Notice that for large enough $\lambda$, 
$$\sum_{a=0}^{k}J_{a}(\lambda)^{2} = 1$$
What we have done is construct a partition of unity $\{J_{a}\}_{a=0}^{k}$. The idea will be to understand the spectrum in localized regions of space around a critical point.
\\
Let us now fix $a \in \{0,1, _{\spdddot},k\}$. We want to apply our partition of unity to study how the approximate Hamiltonian differs from the actual Hamiltonian in a neighborhood of a critical point. We wish to look at 
$$\|J_{a}(\lambda) [H(\lambda) - H^{a}(\lambda)] J_{a}\|$$
Note that the Laplacian cancels out so we are left with two terms. Let us look at the first term:
$$|\lambda^{2}J_{a}(\lambda) [h - \frac{1}{2} \sum_{ij} \frac{\partial^{2}h}{\partial x_{i} \partial x_{j}} (x - x^{a})_{i}(x - x^{a})_{j}] J_{a}(\lambda)|$$ 
$$ = \lambda^{2}J_{a}(\lambda)^{2} |x-x^{a}|^{3} \frac{h - \frac{1}{2} \sum_{ij} \frac{\partial^{2}h}{\partial x_{i} \partial x_{j}} (x - x^{a})_{i}(x - x^{a})_{j}}{|x-x^{a}|^{3}}$$
Now using the compact support of $J_{a}(\lambda)$ and the $2^{nd}$ order Taylor Expansion of h
$$ \leq \lambda^{2} \centerdot \lambda^{-6/5} \centerdot  O(1) = O(\lambda^{4/5})$$
Similarly for the second term one can show
$$\lambda J_{a}(\lambda) [g(x) -g(x^{a})] J_{a}(\lambda) = O(\lambda^{3/5})$$
Lets us now list some properties of $H^{a}(\lambda)$ and $K^{a}(\lambda)$. Recalling the translation and dilation operators on the space $L^{2}(\mathbb{R}^{n})$ one can show that $\lambda K^{a}(\lambda)$ is unitarily equivalent to $H^{a}(\lambda)$. Furthermore since $K^{a}(\lambda)$ is just the Harmonic Oscillator up to a constant, we can completely characterize its eigenvalues and eigenfunctions
$$\sigma(K^{a}) = \{[\sum_{i =1}^{n}\omega_{i}^{a}(2n_{i} + 1)] + g(x^{a}) \; | \; n_{1} , _{\spdddot}, n_{n} \in [0,1,2,_{\spdddot}] \}$$
$$ \psi_{n}(x) = p(x)e^{-1/2 \sum_{i =1}^{n} \omega_{i}^{a} <x, \nu_{i}^{a}>^{2}}$$
Where $\{(\omega_{i}^{a})^{2}\}$ are the eigenvalues of the matrix $[A_{ij}^{a}]$,  $p(x)$ is a polynomial and $\{\nu_{i}^{a}\}$ are the orthonormal eigenvectors of $[A_{ij}^{a}]$. Hence by the unitary transformation described above, the eigenvectors of $H^{a}(\lambda)$ are
$$ p(\lambda^{1/2}(x-x^{a}))e^{-1/2 \sum_{i =1}^{n} \omega_{i}^{a} <\lambda^{1/2}(x-x^{a}), \nu_{i}^{a}>^{2}}$$
Now fix a non-negative integer n. We know that there exists $a(n)$ and $\psi_{n}$ such that 
$$H^{a(n)}(\lambda) = \lambda e_{n}\psi_{n}$$
Define
$$\varphi_{n} = J_{a(n)}(\lambda)\psi_{n}$$
The point is that the $\{\varphi_{n}\}$ up to some exponentially decaying term form an orthogonal set locally around each critical point. Fix $n \neq m$. Then, suppose $a(n) \neq a(m)$. By disjointness of the supports of the $J_{a}'s$, for $\lambda$ large enough, we know that they are orthogonal. Suppose $a(n) = a(m)$, then we know that the eigenvectors of $K^{a}$ form an orthonormal set. Hence,
$$|<\varphi_{n},\varphi_{m}> - \delta_{nm}| = \int [1-J_{a(n)}^{2}(\lambda)]\psi_{n} \psi_{m}$$
$$ \leq \int_{|x-x^{a}| \geq \lambda^{-2/5}}^{\infty}|p_{n}p_{m} e^{-1/2 \sum_{i =1}^{n} \omega_{i}^{a} <\lambda^{1/2}(x-x^{a}), \nu_{i}^{a}>^{2}}| = O(e^{-c\lambda^{1/5}})$$
Now using the localization formula, the gradient estimate $\sup_{x}|\nabla J_{a}|^{2} = O(\lambda^{4/5})$ and the estimate on the localized eigenvectors, we find that 
$$<\varphi_{n}, H(\lambda) \varphi_{m}> = \lambda e_{n}\delta_{nm} + O(\lambda^{4/5})$$
Recall the minmax formula:
$$\forall n \in \mathbb{N} \; \; \mu_{n}(\lambda) = sup_{\zeta_{1}, _{\spdddot}, \zeta_{n-1}} \ Q(\zeta_{1}, _{\spdddot}, \zeta_{n-1}; \lambda)$$
Where 
$$Q(\zeta_{1}, _{\spdddot}, \zeta_{n-1}; \lambda) = inf{\{ <\psi, H\psi> \; | \; \psi \in D(H(\lambda)), \;\; \|\varphi \| = 1\; \; \psi \in \{\zeta_{1}, _{\spdddot}, \zeta_{n-1}\}^{\bot} \}}, $$
Fixing $\epsilon > 0$, we know that for each $\lambda$ we can choose $\{\zeta_{1}^{\lambda}, _{\spdddot}, \zeta_{n-1}^{\lambda}\}$ such that 
$$\mu_{n}(\lambda) \leq Q(\zeta_{1}^{\lambda}, _{\spdddot}, \zeta_{n-1}^{\lambda}; \lambda)$$   

Now having shown that up to an exponentially decaying term the $\{\varphi_{n}\}$ form an orthogonal set it follows that for a large enough $\lambda$ they actually span an n-dimensional space. Hence we can use this new span to construct an element $\varphi \in \{\zeta_{1}^{\lambda}, _{\spdddot}, \zeta_{n-1}^{\lambda}\}^{\bot}$ for each sufficiently large $\lambda$. Using the estimate on the expectation value of the energy and the definition of Q we have:
$$Q(\zeta_{1}^{\lambda}, _{\spdddot}, \zeta_{n-1}^{\lambda}; \lambda) \leq \; <\varphi, H(\lambda) \varphi> \; \leq \lambda e_{n}\delta_{nm} + O(\lambda^{4/5})$$ 

Since $\epsilon$ is arbitrary we have
$$\mu_{n} \leq \lambda e_{n}\delta_{nm} + O(\lambda^{4/5})$$ 

Using the Poincar\'{e} Inequality, the assumptions on the functions h and g, and the max-min characterization of the spectrum we also have
$$\inf \sigma_{ess}(H) \geq c\lambda^{2} \; \; \; \textnormal{where} \;  c>0$$
There is a basic fact about that max-min value that says either $H(\lambda)$ has n eigenvalues and $\mu_{n}(\lambda) = E_{n}(\lambda)$ or $\mu_{n}(\lambda) = \inf \sigma_{ess}(H)$. Since we have a linear growth bound on $\mu_{n}$ from above and $\inf \sigma_{ess}(H)$ grows at least quadratically, this implies that for $\lambda$ sufficiently large $\mu_{n}(\lambda) = E_{n}(\lambda)$. Hence we have
$$E_{n}(\lambda) \leq \lambda e_{n}\delta_{nm} + O(\lambda^{4/5})$$ 
This proves Step 1. 
\\
\\
$\textbf{Step 2}$ Claim:  $\liminf_{\lambda \to \infty} \frac{E_{n}(\lambda)}{\lambda} \geq e_{n}$
\\
\\
To show the lower bound we actually claim that it suffices to show for any $e$ not in $\sigma(\bigoplus_{a} K^{a})$, for example $e \in (e_{m}, e_{m+1})$ 
$$H(\lambda) \geq \lambda e + R + o(\lambda)$$
where R is a rank m symmetric operator. To see why this implies the estimate, suppose 
$$H(\lambda) \geq \lambda e + R + o(\lambda) \; \; \forall m$$
with $e_{m+1} > e_{m}$ and $e \in (e_{m}, e_{m+1})$. Simply pick a vector $\psi$ in the span of the first m+1 eigenfunctions of $H(\lambda)$ with $\|\psi\| = 1$ and $\psi \in ker(R)$. We have
$$E_{m+1} \geq \; <\psi, H\psi> \; \geq \lambda e + o(\lambda)$$
And we have our desired estimate for $n = m+1$. Similarly one can argue the same if $e_{m+1} = e_{m}$.
\\
\\
Now fix $e \in (e_{m}, e_{m+1})$. We apply the IMS localization theorem to get
$$H(\lambda) = \sum_{a} J_{a}HJ_{a} -  \sum_{a} |\nabla J_{a}|^{2}$$
$$ = J_{0}HJ_{0} + \sum_{a} J_{a}H^{a}J_{a} + O(\lambda^{4/5})$$
Because of the quadratic convergence of h to its zeros, and the fact that $J_{0}$ has support away from the ball of radius $\lambda^{-2/5}$ around each critical point of h, we see that on the support of $J_{0}$
$$h(x) \geq c(\lambda^{-2/5})^{2} = c\lambda^{-4/5}$$
Hence $J_{0}HJ_{0}$ grows like $\lambda^{2} \centerdot \lambda^{-4/5} = \lambda^{6/5}$ for large $\lambda$. Thus for an even larger $\lambda$ since the exponential power is strictly greater than 1
$$ J_{0}HJ_{0} \geq \lambda e J_{0}^{2} $$
For $a \neq 0$ one has
$$ J_{a}H^{a}J_{a}  \geq J_{a}R^{a}J_{a} +  \lambda e J_{a}^{2}$$ 
Where $R^{a}(\lambda)$ is the restriction of $H^{a}$ to the span of all eigenvectors of $H^{a}$ with eigenvalues below $\lambda e$. We also have $Rank(J_{a}R^{a}J_{a}) \leq Rank(R^{a})$ where $R^{a}$ is the number of eigenvalues of $K^{a}$ below $e$. Now we recall a fact from Linear Algebra $Rank(A+B) \leq Rank(A) + Rank(B)$. So
$$Rank[\sum_{a} J_{a}R^{a}J_{a} ] \leq \sum_{a} Rank(J_{a}R^{a}J_{a})$$ 
$$= \{\textnormal{Number of Eigenvalues of} \; \bigoplus_{a}K^{a} \; \textnormal{below e} \} = n$$
Hence
$$ H(\lambda) \geq \lambda e J_{0}^{2}  + \lambda e  \sum_{a}  J_{a}^{2} +  \sum_{a}  J_{a}R^{a}J_{a} + O(\lambda^{4/5}) \geq \lambda e + R + O(\lambda^{4/5})$$ 
\end{proof}

We conclude our discussion of the semi-classical analysis with some heuristic remarks how one goes about extending the above results to compact finite dimensional manifolds. As we will see in the next section Witten's analysis requires an application of the semi-classical limit to a Schr\"{o}dinger type operator acting in a Hilbert Space where the underlying topological space is a compact finite dimensional manifold. 
\\
\\
There are three major hurdles one needs to overcome when applying the above results to operators of the form $L + \lambda^{2}h + \lambda g$ acting on $\Omega^{p}$ where we have taken L to be the Laplace Beltrami Operator. 
\\
1. First we need to extend the IMS Localization Formula
\\
2. $L \neq \Delta$ in local coordinates where $-\Delta$ is the Euclidean Laplacian
\\
3. h and g may have nontrivial vectoral dependence. 
\\
\\
1. Let $a^{*}(f)$ be the wedge product with $df$ and $a(f)$ its adjoint. Then $\{a^{*},a\} = |df|^{2}$. One sees that for $f$ a given function and $\omega$ a given p-form
$$[d,f\omega] =  a^{*}(f)$$
$$[d^{*},f\omega] = -a(f)$$
$$[f\omega, [f\omega,L]] = -2|df|^{2}$$
Hence one gets an IMS localization formula of the form
$$L = \sum J_{\alpha}LJ_{\alpha} - \sum |dJ_{\alpha}|^{2}$$
\\
2. As we will see in the next section, locally around every contact point we can take the metric to be flat. Hence locally $L = \Delta$. 
\\
\\
3. We will also see below that after a diagonalization of $g(x^{a})$, this operator becomes a sum of scalar valued operators. Also below h is restricted to act as a multiple of the Identity matrix. 

\section{Witten's Proof of the Morse Inequalities}
In previous sections we have considered the SUSY model 
$$(H = \Delta, Q = d+d^{*}, (-1)^{p})$$ 
acting on the Hilbert Space $\mathcal{H} = \bigoplus_{p = 0}^{n} \Omega^{p}$.  To obtain the Morse Inequalities in both their strong and weak forms we will consider a deformed SUSY model by defining the $t$ dependent $(t \in \mathbb{R})$ model $(H = \Delta_{t}, d_{t} + d_{t}^{*}, (-1)^{p})$ acting on the same Hilbert Space. We define for a Morse function $f$
$$d_{t} = e^{-tf}de^{tf}$$
$$d_{t}^{*} = e^{-tf}d^{*}e^{tf}$$  
$$\Delta_{t} = d_{t}d_{t}^{*} + d_{t}^{*}d_{t}$$
One can check explicitly that all of the conditions for Supesymmetry are satisfied. Since the operators $d$ and $d^{*}$  are equal to the operators $d_{t}$ and $d_{t}^{*}$ respectively up to conjugation we see immediately that
$$H_{t}^{p} = ker(d_{t}: \varOmega^{p} \to \varOmega^{p+1}) \setminus im(d_{t}: \varOmega^{p-1} \to \varOmega^{p})$$
$$ = e^{-tf}ker(d_{t}: \varOmega^{p} \to \varOmega^{p+1}) \setminus e^{-tf}im(d_{t}: \varOmega^{p-1} \to \varOmega^{p})$$
$$ = ker(d: \varOmega^{p} \to \varOmega^{p+1}) \setminus im(d: \varOmega^{p-1} \to \varOmega^{p}) = H^{p} $$
On the other hand since we defined the deformed Laplacian we can extend the results of Hodge Theory and conclude that
$$Ker[\Delta_{t}|_{\Omega^{p}}] \simeq H_{t}^{p}$$
Hence what we have shown is the independence of the $t$ parameter in the dimension of the deformed Cohomological space. More specifically we have
$$\beta_{p} = dim(ker[\Delta_{t}|_{\Omega^{p}}])$$
What this result tells us is that to estimate $\beta_{p}$ it suffices to estimate the dimension of  $Ker[\Delta_{t}|_{\Omega^{p}}]$ for any $t$. The fundamental observation made by Witten was that the $t \to \infty$ limit is the semi-classical limit for the operator $\Delta_{t}$. More concretely the asymptotic estimates for the eigenvalues of $\Delta_{t}$ can be used to estimate the dimension of $Ker[\Delta_{t}|_{\Omega^{p}}]$ for large $t$.  

In the course of our analysis it will be useful to define a few new operators, whose action locally on p-forms is defined as follows
$$(a^{i})^{*}\alpha = dx^{i} \wedge \alpha$$
The adjoint action $a_{i}$ can be directly calculated
$$a^{i}dx^{j_{i}} \wedge _{\spdddot} \wedge dx^{j_{p}} = \sum_{k=1}^{p} (-1)^{k}g^{ij_{k}}dx^{j_{i}} \wedge _{\spdddot} \wedge dx^{j_{k-1}} \wedge dx^{i_{k+1}} \wedge _{\spdddot}dx^{j_{p}} $$ 
Using these two facts one can easily convince themselves that
$$\{a^{i}, (a^{j})^{*}\} = g^{ij}$$
These newly defined objects should remind us of the Fermion creation and annihilation operators that arise in Physics. We now provide an expression for $\Delta_{t}$ which highlights the critical points of our Morse function $f$.  We now proceed to prove the Morse Inequalities We follow the arguments in \cite{CFKS}. 
\begin{proposition} $$\Delta_{t} = \Delta + t^{2}\|df\|^{2} + tA$$ Where A is the $zero^{th}$ order operator, i.e. A acts as multiplication by smooth functions, represented by
$$A = \sum_{ij}(\frac{\partial^{2}f}{\partial x^{i} \partial x^{j}}) [(a^{i})^{*}, a^{j}]$$ in a flat neighborhood with respect to an orthonormal coordinate system.
\end{proposition}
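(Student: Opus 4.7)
The plan is to write $d_t$ and $d_t^*$ as explicit first-order perturbations of $d$ and $d^*$, expand the anticommutator $\Delta_t = \{d_t, d_t^*\}$, and collect the result by powers of $t$. The $t^0$ and $t^2$ contributions will fall out immediately from algebraic identities already recorded in the paper; the real work is in identifying the $t^1$ piece with $A$.

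First I would apply the Leibniz rule to $d(e^{tf}\omega) = e^{tf}(t\, df\wedge\omega + d\omega)$ to conclude that $d_t = d + t(df\wedge\cdot)$, i.e.\ $d_t = d + t\sum_i(\partial_i f)(a^i)^*$ in the local frame. Taking the formal adjoint then gives $d_t^* = d^* + t\,\iota_{\nabla f} = d^* + t\sum_i(\partial_i f)\,a^i$, the interchange $df\wedge \leftrightarrow \iota_{\nabla f}$ being the pointwise duality $((a^i)^*)^* = a^i$. Expanding yields
$$\Delta_t \;=\; \{d,d^*\} \;+\; t\bigl(\{d,\iota_{\nabla f}\} + \{d^*, df\wedge\}\bigr) \;+\; t^2\, \{df\wedge,\iota_{\nabla f}\}.$$
The first summand is $\Delta$ by definition. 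For the last, bilinearity together with the Clifford relation $\{a^i,(a^j)^*\}=g^{ij}$ recalled just above collapses $\{df\wedge,\iota_{\nabla f}\}$ to $\sum_{ij}(\partial_i f)(\partial_j f)g^{ij} = \|df\|^2$. It remains to identify the $t^1$ bracket with $A$.

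For that step I would pass to a flat orthonormal chart, in which $d = \sum_i(a^i)^*\partial_i$ and $d^* = -\sum_i a^i\partial_i$, and expand each anticommutator via Leibniz. Each anticommutator produces two kinds of terms: a Hessian-weighted zeroth-order piece (namely $\sum_{ij}(\partial_i\partial_j f)(a^i)^* a^j$ from $\{d,\iota_{\nabla f}\}$ and $-\sum_{ij}(\partial_i\partial_j f)\,a^j(a^i)^*$ from $\{d^*,df\wedge\}$); and a first-order transport piece of the form $\sum_{ij}(\partial_j f)\{(a^i)^*,a^j\}\partial_i$, which by the Clifford relation simplifies to $\sum_i(\partial_i f)\partial_i$ in one anticommutator and to $-\sum_i(\partial_i f)\partial_i$ in the other. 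The main obstacle is this second bookkeeping: the two first-order transport terms must cancel exactly, and they do precisely because $\{(a^i)^*, a^j\} = \delta^{ij}$ in the orthonormal frame. What survives is
$$\sum_{ij}(\partial_i\partial_j f)\bigl((a^i)^* a^j - a^j(a^i)^*\bigr) \;=\; \sum_{ij}(\partial_i\partial_j f)\bigl[(a^i)^*, a^j\bigr],$$
which is exactly $A$. This final cancellation also explains the restriction to a flat orthonormal neighborhood: in a general chart the metric derivatives and Christoffel symbols produced by $d^*$ would leave residual first-order terms and obscure the clean identification of $A$ with the Hessian-bilinear Clifford commutator.
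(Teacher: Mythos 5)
Your argument is correct and follows the same route as the paper: expand $d_t = d + t\sum_i(\partial_i f)(a^i)^*$ and $d_t^* = d^* + t\sum_i(\partial_i f)a^i$, read off the $t^0$ and $t^2$ terms from the Clifford relation $\{a^i,(a^j)^*\}=g^{ij}$, and pass to a flat orthonormal chart with $d=\sum_i(a^i)^*\partial_i$ to identify the $t^1$ coefficient. The one thing you do more carefully than the paper is to exhibit explicitly the cancellation of the two first-order transport pieces $\pm\sum_i(\partial_i f)\partial_i$, which is exactly what makes $A$ zeroth order; the paper's computation jumps past this point.
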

\begin{proof}
The proof is a direct calculation. 
$$d_{t} \alpha = d \alpha + t \; df \wedge \alpha$$ Where $$df = \sum_{i} \frac{\partial f}{\partial x^{i}} dx^{i}$$ We see that
$$d_{t} = d + t\sum_{i}^{n} \frac{\partial f}{\partial x^{i}} (a^{i})^{*}$$ Hence
$$d_{t}^{*} = d^{*} + t\sum_{i}^{n} \frac{\partial f}{\partial x^{i}} (a^{i})$$
This implies
$$\Delta_{t} = \{d_{t},d_{t}^{*}\} = \Delta + t^{2} \sum_{ij}\frac{\partial f}{\partial x^{i}} \frac{\partial f}{\partial x^{j}} \{a^{i}, (a^{j})^{*}\} + tA$$
Where 
$$A = \{d, \sum_{j}\frac{\partial f}{\partial x^{j}} a^{j} \} + \{d, \sum_{j}\frac{\partial f}{\partial x^{j}} a^{j} \}^{*}$$
Our task is to show that $A$ is a $zero^{th}$ order operator. Let us define $\partial_{i}$ by
$$\partial_{i} (\sum u_{j_{i}} \; _{\spdddot}{j_{p}} dx^{j_{i}} \wedge \; _{\spdddot} \wedge dx^{j_{p}} ) = \sum \frac{\partial u_{j_{i}  \;_{\spdddot} j_{p}}}{\partial x^{i}}  dx^{j_{i}} \wedge \; _{\spdddot} \wedge dx^{j_{p}}$$
So we have
$$d = \sum_{i} (a^{i})^{*} \partial_{i} $$ 
Applying this representation to $A$ and considering a local coordinate system in a flat neighborhood, we observe:
$$[\partial_{i}, a^{j}] = 0$$
$$A = \sum_{ij} (\frac{\partial^{2}f}{\partial x^{i} \partial x^{j}})  [2(a^{i})^{*}a^{j} - \delta^{ij}]$$ This gives us the conclusion of the proposition after noting that $\{(a^{i})^{*}, a^{j}\} = \delta^{ij}$ when the metric is flat.  
\end{proof}
The time has come to prove the Morse Inequalities using ideas from Supersymmetric Quantum Mechanics. We first show the Weak Morse Inequalities using the formalism we have developed thus far. The main idea is to apply the Manifold version of the semi-classical analysis to the following operator acting on p-forms.  
$$\Delta_{t} = \Delta + t^{2}\|df\|^{2} + tA$$
We are interested in estimating the dimension of the kernel for large values of t. This is in fact the idea inherent in Witten's approach to proving the weak Morse Inequalities. 
\begin{proof}
The first task is to choose a Riemannian metric so that in a neighborhood of a critical point we have the Morse Coordinates, and by the isolation of the critical points and hence their finiteness, we can patch these neighborhoods together with an arbitrary metric in the other regions using a partition of unity. Applying the Morse coordinates in a neighborhood of a critical point $x^{a}$ we have
$$\Delta_{t} = \Delta + 4t^{2}\sum_{i}^{n}x_{i}^{2} + 2t \sum_{i=1}^{n-\mu(x^{a})}[(a^{i})^{*},a^{i}] -2t \sum_{i= n-\mu(x^{a}) + 1}^{n}[(a^{i})^{*},a^{i}]$$   
Where $\mu(x^{a})$ represents the Morse Index of the critical point $x^{a}$. 
\\
\\ 
We also define 
$$K^{a} = \Delta + 4\sum_{i}^{n}x_{i}^{2}  + A^{a}$$
Where 
$$A^{a} = 2 \sum_{i=1}^{n-\mu(x^{a})}[(a^{i})^{*},a^{i}] -2 \sum_{i= n-\mu(x^{a}) + 1}^{n}[(a^{i})^{*},a^{i}]$$
Let us first compute the spectrum of $\bigoplus_{a}K^{a}$. The first observation to make is that $\Delta + 4\sum_{i}^{n}x_{i}^{2}$ acts as a scalar operator on p-forms. Actually what we have is the Harmonic Oscillator with eigenvalues 
$$\{\sum_{i}^{n} 2(1 +2 n_{i}) \; | \; n_{i}, _{\spdddot}, n_{n} \in \{0,1,2, _{\spdddot}\} \}$$
We recall that each eigenvalue has $\frac{n!}{(n-p)!p!}$ independent eigenvectors which can be represented by 
$$\psi \; dx_{i_{1}} \wedge _{\spdddot} \wedge dx_{i_{p}} \; \; 1 \leq i_{1} < _{\spdddot} < i_{p} \leq n$$
One should also see that
$$ [(a^{i})^{*},a^{i}](f \; dx_{i_{1}} \wedge _{\spdddot} \wedge dx_{i_{p}}) =  
\begin{cases}
f \; dx_{i_{1}} \wedge _{\spdddot} \wedge dx_{i_{p}}, & \text{if }i \in \{i_{1}, _{\spdddot}, i_{p}\} \\
-f \; dx_{i_{1}} \wedge _{\spdddot} \wedge dx_{i_{p}}, & \text{if} \; i \notin \{i_{1}, _{\spdddot}, i_{p}\}  
\end{cases}
$$
Hence if we consider its action on the eigenfunctions we see that $A^{a}$ acts diagonally:
$$A^{a} \psi \; dx_{i_{1}} \wedge _{\spdddot} \wedge dx_{i_{p}} = \gamma_{a}\psi dx_{i_{1}} \wedge _{\spdddot} \wedge dx_{i_{p}}$$
Where
$$ \gamma_{a} = |I \cap K| - |J \cap K| - |I \cap L| + |J \cap L|$$
Defining: $I = \{i_{1}, _{\spdddot}, i_{p}\}$,$\;J  = \{1, _{\spdddot}, n \} \setminus I$, $K = \{1, _{\spdddot}, n-\mu_{x^{a}}\}$, $L = \{n-\mu_{x^{a}} + 1, _{\spdddot}, n\}$. We collect these facts and note:
$$\sigma(K^{a}) = \{ \sum_{i=1}^{n} 2(1+2n_{i}) + 2\gamma_{a} \; | \; n_{1}, _{\spdddot}, n_{n} \in \{0,1,2, _{\spdddot} \} \; \; \textnormal{and} \; \; 1 \leq i_{1} < _{\spdddot} < i_{p} \leq n \}$$
Our next task is to understand the term $\gamma_{a}$.   Upon inspection what we find is that $\gamma_{a} \geq -n$. Since this gives us a trivial lower bound on the spectrum, we note that if we want to understand the multiplicity of the zero eigenvalue we must set all of $(n_{i})$ to zero (or we will instead be studying the  non-zero eigenvalues of $K^{a}$). Upon deeper considerations we find that $\gamma_{a} = -n$ precisely when $(i_{1}, _{\spdddot}, i_{p}) = (n-p+ 1, _{\spdddot}, n)$. What we conclude from these observations is $Ker(K^{a}|_{\Omega^{p}}) = 0$ unless $\mu(x^{a}) = p$. In that case $dim([Ker(K^{a}|_{\Omega^{p}}]) = 1$. This implies that 
$$dim([Ker(\bigoplus_{a}K^{a}|_{\Omega^{p}}]) = M_{p}(f)$$
Letting $E_{n}^{p}(t)$ be the eigenvalues of $\Delta_{t}|_{\Omega^{p}}$ counting multiplicity and $e_{n}^{p}$ be the eigenvalues of $\bigoplus_{a}K^{a}|_{\Omega^{p}}$ counting multiplicity. From the semi-classical analysis we know
$$\lim_{t \to \infty} \frac{E_{n}^{p}(t)}{t} = e_{n}^{p}$$
Thus
$$\beta_{p} =  dim(Ker[\Delta_{t}|_{\Omega^{p}}]) \leq dim[Ker(\bigoplus_{a}K^{a}|_{\Omega^{p}})] = M_{p}(f)$$
\end{proof}
Thus we have shown the Weak Morse Inequality. We will now apply ideas from Supersymmetry to prove the strong Morse Inequalities.
\begin{proof}
 We know that $e_{M_{p}+1}^{p}$ represents the first nonzero Eigenvalue for  $\bigoplus_{a}K^{a}|_{\Omega^{p}}$. This implies for large $t$ that $E_{n}^{p}(t)$ grows like $t$ for $ n\geq M_{p} + 1$. When considering the rest of the eigenvalues we know that the first $\beta_{p}$ are zero. We call the eigenvalues $\{E_{\beta_{p+1}}^{p}(t), _{\spdddot}, E_{M_{p}}^{p}(t) \}$ the low-lying eigenvalues. They are nonzero but $o(t)$ as $t \to \infty$. We now recall that the fundamental property of Supersymmetry tells us that the low-lying eigenvalues occur in even-odd pairs. For each low-lying eigenvalue $E_{n}^{p}$ with p even there is a low-lying eigenvalue  $E_{n}^{p'}$ with p' odd. Hence
$$\sum_{p \; odd} (M_{p} - \beta_{p}) =  \sum_{p \; even} (M_{p} - \beta_{p})$$
Which implies the Morse Index Theorem:
$$\sum_{p =0}^{n} (-1)^{p}M_{p} =   \sum_{p =0}^{n} (-1)^{p}\beta_{p} + \sum_{p \; even} (M_{p} - \beta_{p}) - \sum_{p \; odd} (M_{p} - \beta_{p})  = \sum_{p =0}^{n} (-1)^{p}\beta_{p}$$
To get the Strong Morse Inequalities we need to look closer at the Supersymmetric cancellations. Let $\Lambda_{t}^{p}$ denote the $M_{p} - \beta_{p}$ dimensional subspace of low-lying eigenvalues. Now we know that $Q_{t}^{2} = \Delta_{t}$, thus $Q_{t}$ preserves the eigenspaces of $\Delta_{t}$. Hence $Q_{t}$ is a one-to-one map and 
$$ Q_{t}: \bigoplus_{l \; odd \; l =1}^{2j-1}   \Lambda_{t}^{l} \to \bigoplus_{l \; even \; l=0}^{2j}  \Lambda_{t}^{l}$$
$$ Q_{t}: \bigoplus_{l \; even \; l= 0}^{2j}  \Lambda_{t}^{l} \to \bigoplus_{l \; odd \; l = 1}^{2j+1}  \Lambda_{t}^{l}$$ 
\\
By injectivity the dimensions of the domain must be less than or equal to the dimensions of the target space. Hence for $0 \leq 2j < n$ and $0 \leq 2j+1 < n$

$$(M_{1} - \beta_{1}) + _{\spdddot} + (M_{2j-1} - \beta_{2j-1}) \leq (M_{0} - \beta_{0})  + _{\spdddot} + (M_{2j} - \beta_{2j}) $$
$$(M_{0} - \beta_{0}) + _{\spdddot} + (M_{2j} - \beta_{2j}) \leq (M_{1} - \beta_{1})  + _{\spdddot} + (M_{2j+1} - \beta_{2j+1}) $$

\end{proof}

We conclude this section with some remarks on Witten's arguments for deriving the Strong Morse Inequalities. Using only algebraic techniques one can prove that the counting series 
$$C_{t} = \sum_{q} t^{q} \; dim(C_{q})$$ for any finite dimensional cochain complex satisfies the Polynomial Morse Inequalities relative to the Poincar\'{e} series of its Cohomology 
$$P_{t} = \sum_{q} t^{q} \; dim(H^{q}(C))$$ We essentially considered this viewpoint when we first encountered the Morse Inequalities. The point is that the Morse Inequalities do not give a canonical form for the coboundary operator, hence one is at liberty to construct such an operator and show that the Betti numbers of this Cohomology equals those of the underlying manifold M. In essence Witten's idea is to consider the vector space $X_{p}$, $p = 0, _{\spdddot}, n$ spanned by the low-lying eigenvalues considered above. One then restricts the "co-boundary" operator to this vector space, which is well defined since $d_{t}$ commutes with $\Delta_{t}$, so $d_{t}$ preserves the eigenspaces of $\Delta_{t}$. The point is that the $p^{th}$ Cohomology group of this complex has dimension $\beta_{p}$ and the dimension of $X_{p}$ is $M_{p}$. Hence one gets the Strong Morse Inequalities from our initial remarks modulo some technical points which show that the dimension of the chain complex is independent of the t parameter and the explicit construction of this boundary operator. In fact Witten's idea is to use Instantons to carry critical points of index k to critical points of index k+1 (very similar analysis to what we mentioned previously). The rigorous justification for these points was completed in the work of Helffer and Sj\"{o}strand. For a wonderful exposition about Morse Homology and Witten's construction we highly recommend Bott's paper. He even gives you a Physical example! Finally we just mention for completeness that in the literature this cochain construction goes by the name of "The Witten Complex" and is an alternate way of understanding Morse Homology.

\end{document}